\newtheorem{thm}{Theorem}[section]
\newtheorem{pro}[thm]{Proposition}
\newtheorem{lem}[thm]{Lemma}
\newtheorem*{thmA}{Theorem A}
\newtheorem*{thmB}{Theorem B}
\newtheorem*{thmC}{Theorem C}
\def\l{\langle}
\def\r{\rangle}
\date{}
\title{\normalsize{\bf ON VARIETIES OF GROUPS SATISFYING\\ AN ENGEL TYPE IDENTITY}}
\author{\small{\textsc{Pavel Shumyatsky\begin{footnote}{This work was carried out during the first author's visit to the University of Salerno. He would like to thank the Department of Mathematics for hospitality and GNSAGA (INdAM) for financial support.}\end{footnote}}}\\
\small{Department of Mathematics, University of Brasilia}\\
\small{Brasilia-DF, 70910-900 Brazil}\\
\small{E-mail: pavel@unb.br}\\
[10pt]
\small{\textsc{Antonio Tortora} and \textsc{Maria Tota}}\\
\small{Dipartimento di Matematica, Universit\`a di Salerno}\\
\small{Via Giovanni Paolo II, 132 - 84084 - Fisciano (SA), Italy}\\
\small{E-mail: antortora@unisa.it, mtota@unisa.it}}
\begin{document}
\maketitle

\begin{abstract} Let $m,n$ be positive integers, $v$ a multilinear commutator word and $w=v^m$. Denote by $v(G)$ and $w(G)$ the verbal subgroups of a group $G$ corresponding to $v$ and $w$, respectively. We prove that the class of all groups $G$ in which the $w$-values are $n$-Engel and $w(G)$ is locally nilpotent is a variety (Theorem A). Further, we show that in the case where $m$ is a prime-power the class of all groups $G$ in which the $w$-values are $n$-Engel and $v(G)$ has an ascending normal series whose quotients are either locally soluble or locally finite is a variety (Theorem B). We examine the question whether the latter result remains valid with $m$ allowed to be an arbitrary positive integer. In this direction, we show that if $m,n$ are positive integers, $u$ a multilinear commutator word and $v$ the product of 896 $u$-words, then the class of all groups $G$ in which the $v^m$-values are $n$-Engel and the verbal subgroup $u(G)$ has an ascending normal series whose quotients are either locally soluble or locally finite is a variety (Theorem C).
\\

\noindent{\bf 2010 Mathematics Subject Classification:} 20E10, 20F45\\
{\bf Keywords:} varieties of groups, Engel elements
\end{abstract}

\section{Introduction}

A variety is a class of groups defined by equations. More precisely, if $W$ is a set of words, the class of all groups satisfying the laws $W\equiv 1$ is called the variety determined by $W$. By a well-known theorem of Birkhoff \cite[2.3.5]{Rob}, varieties are precisely classes of groups closed with respect to taking subgroups, quotients and Cartesian products of their members. Some interesting varieties of groups have been discovered in the context of the Restricted Burnside Problem solved in the affirmative by Zelmanov \cite{ze1,ze2}.

It is well-known that the solution of the Restricted Burnside Problem is equivalent to each of the following statements.

\begin{itemize}
\item[$(i)$] The class of locally finite groups of exponent $n$ is a variety.
\item[$(ii)$] The class of locally nilpotent groups of exponent $n$ is a variety.
\end{itemize}

The equivalence of the two above results follows from the famous Hall-Higman reduction theorem \cite{hahi}. Recall that a group is said to locally have some property if all its finitely generated subgroups have that property. A number of varieties of (locally nilpotent)-by-soluble groups were presented in \cite{shu1, shu2}.

The solution of the Restricted Burnside Problem strongly impacted our understanding of Engel groups. An element $x$ of a group $G$ is called a {\em (left) Engel element} if for any $g\in G$ there exists $n=n(x,g)\geq 1$ such that $[g,_n x]=1$, where the commutator $[g,_n x]$ is defined recursively by the rule
$$[g,_n x]=[[g,_{n-1}x],x]$$
starting with $[g,_0x]=g$ and $[g,_1x]=[g,x]=g^{-1}x^{-1}gx$. If $n$ can be chosen independently of $g$, then $x$ is a (left) $n$-Engel element. A group $G$ is called $n$-Engel if all elements of $G$ are $n$-Engel. In \cite{ze0} Zelmanov remarked that the eventual solution of the Restricted Burnside Problem would imply that the class of locally nilpotent $n$-Engel groups is a variety (see also Wilson \cite{w}). Recently groups with $n$-Engel word-values were considered \cite{BSTT,bs,STT2,STT3}.

If $w$ is a word in variables $x_1,\dots,x_k$ we think of it primarily as a function of $k$ variables defined on any given group $G$. We denote by $w(G)$ the verbal subgroup of $G$ generated by the values of $w$. The word $w$ is called {\em multilinear commutator} (of weight $s$) if it has a form of a multilinear Lie monomial (in precisely $s$ independent variables). Particular examples of multilinear commutators are the lower central words $\gamma_k$,
defined by
\[
\gamma_1=x_1,
\qquad
\gamma_k=[\gamma_{k-1},x_k]=[x_1,\ldots,x_k],
\quad
\text{for $k\ge 2$;}
\]
and the derived words $\delta_k$, on $2^k$ variables, which are defined by
\[
\delta_0=x_1,
\quad
\delta_k=[\delta_{k-1}(x_1,\ldots,x_{2^{k-1}}),\delta_{k-1}(x_{2^{k-1}+1},\ldots,x_{2^k})],
\quad
\text{for $k\ge 1$.}
\]
The verbal subgroups $\gamma_k(G)$, corresponding to $\gamma_k$, and $G^{(k)}$, corresponding to $\delta_k$, are the $k$-th term of the lower central series of $G$ and the $k$-th derived subgroup of $G$, respectively.

The goal of the present article is to prove the following theorems.

\begin{thmA}
Let $m,n$ be positive integers, $v$ a multilinear commutator word and $w=v^m$. Then the class of all groups $G$ in which the $w$-values are $n$-Engel and the verbal subgroup $w(G)$ is locally nilpotent is a variety.
\end{thmA}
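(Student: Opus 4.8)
The plan is to use Birkhoff's theorem: since the class in question is obviously closed under taking subgroups and quotients, it suffices to prove closure under Cartesian products. Equivalently, one must show that there is a function $f$ of $m,n$ (and the word $v$) such that whenever a group $G$ has all $w$-values $n$-Engel and $w(G)$ locally nilpotent, the nilpotency class of every finitely generated subgroup of $w(G)$ is bounded by $f$ — for then the defining property becomes expressible by a set of laws, and the Cartesian product of such groups again satisfies them. So the real content is a \emph{uniform bound} on the nilpotency class of finitely generated subgroups of $w(G)$, depending only on $m,n$ and $v$, not on $G$.

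First I would reduce to the finitely generated nilpotent case. If $w(G)$ is locally nilpotent, take a finitely generated subgroup $H\le w(G)$; since $H$ is generated by finitely many $w$-values and their conjugates, I would like to arrange $H$ to lie inside a finitely generated subgroup of the form $w(K)$ for a suitable finitely generated $K\le G$, and then work with $N:=w(K)$, which is nilpotent and finitely generated. The key local fact to exploit is that $w=v^m$, so every $w$-value is an $m$-th power of a $v$-value, and $v$-values generate $v(G)$; thus $w(G)\le v(G)$ and $w(G)$ is generated by $m$-th powers of $v$-values lying in the (locally nilpotent) group $v(G)$.

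The heart of the argument should be a Lie-theoretic / Zelmanov-type input, analogous to what is used for $n$-Engel groups: in a finitely generated nilpotent group $N$ generated by $w$-values, each such generator $x=g^m$ is an $n$-Engel element, and one wants to conclude that $N$ is nilpotent of bounded class. I would invoke the results of \cite{BSTT,bs,STT3} (or the Lie-algebra machinery behind Zelmanov's theorem, as in Wilson \cite{w}) guaranteeing that a nilpotent group generated by finitely many $n$-Engel word-values which are $m$-th powers of commutator-values has nilpotency class bounded in terms of $m$, $n$, the number of generators, and $v$ — and then remove the dependence on the number of generators by a standard argument: a nilpotent group all of whose $d$-generated subgroups have class $\le c(d)$ need not have bounded class in general, but here one uses that the relevant Engel condition is inherited, together with the fact that in a nilpotent group the class is attained on finitely many (boundedly many) generators, to collapse the bound to one depending only on $m,n,v$. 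Concretely, I expect to pass to the associated Lie ring, show the Engel condition on the generating $w$-values forces an Engel-type identity on a generating set of the Lie ring, apply Zelmanov's theorem on Lie algebras satisfying a polynomial identity and generated by Engel elements to get local nilpotency with a bound, and lift back.

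The main obstacle is precisely this last uniformity: turning "each finitely generated subgroup of $w(G)$ is nilpotent" into "there is a single class bound for all of them." The subtlety is that the $n$-Engel condition is imposed only on the $w$-\emph{values}, not on all of $w(G)$, so one cannot directly quote that locally nilpotent $n$-Engel groups form a variety; the generators are special (they are $m$-th powers of $v$-values), and one must feed exactly that structure into the Lie-ring analysis. I would expect the proof to hinge on a lemma saying that in a nilpotent group, the normal closure of an $n$-Engel element that is an $m$-th power of a commutator value is nilpotent of bounded class, and then to build $w(G)$ up from boundedly many such normal closures; verifying that the product of boundedly many such bounded-class normal subgroups has bounded class (using that they are all normal in $w(G)$) is the step where care is needed, but it is the kind of bound that the Hall–Higman–style and Zelmanov-style results quoted in the introduction are designed to deliver.
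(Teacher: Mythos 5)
Your overall architecture --- Birkhoff's theorem, reduction to closure under Cartesian products, and a uniform nilpotency-class bound for subgroups generated by boundedly many $n$-Engel $w$-values --- is exactly the paper's. One preliminary correction: you insist on a class bound depending only on $m,n,v$ and propose to ``remove the dependence on the number of generators by a standard argument''. No such removal is needed (and it is very likely unobtainable): for $G$ a Cartesian product of groups $G_i$ in the class, a finitely generated subgroup $H\le w(G)$ lies in $K=\langle w_1,\dots,w_d\rangle$ for finitely many $w$-values, each projection $\pi_i(K)$ is a nilpotent group generated by $d$ elements which are $n$-Engel, and a bound $\epsilon(d,m,n,v)$ on the class of each $\pi_i(K)$ already forces $K$, hence $H$, to have class at most $\epsilon$, since the kernels of the $\pi_i$ intersect trivially. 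A bound depending on $d$ is all the variety argument requires.

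The genuine gap is that you never actually produce the bound $\epsilon(d,m,n,v)$. The result available from \cite{BSTT} is qualitative: a finitely generated residually finite group whose $w$-values are $n$-Engel and which is generated by finitely many Engel elements is nilpotent --- with no class bound; quoting it as if it came with a bound assumes the point at issue. Your fallback mechanism, that the normal closure of an $n$-Engel element which is an $m$-th power of a $v$-value is nilpotent of $\{m,n\}$-bounded class, is not a known result and is essentially as hard as the boundedness questions for left Engel elements that remain open. The paper bridges the gap with a compactness argument you do not mention: if $d$-generated nilpotent groups $G_i$ in the class, each generated by $d$ elements which are $n$-Engel, had unbounded classes, one forms the Cartesian product $C$ of the $G_i$ and the subgroup $H\le C$ generated by the $d$ ``diagonal'' elements whose $i$-th components are the chosen generators of $G_i$; then $H$ is finitely generated and residually nilpotent, hence residually finite, its $w$-values are $n$-Engel, and it is generated by Engel elements, so the proposition from \cite{BSTT} makes $H$ nilpotent, contradicting the fact that every $G_i$ is a quotient of $H$. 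With that soft argument in place, no Lie-ring analysis or normal-closure lemma is needed.
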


\begin{thmB}
Let $m$ be a prime-power, $n$ a positive integer, $v$ a multilinear commutator word and $w=v^m$. Then the class of all groups $G$ in which the $w$-values are $n$-Engel and the verbal subgroup $v(G)$ has an ascending normal series whose quotients are either locally soluble or locally finite is a variety.
\end{thmB}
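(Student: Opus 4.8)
The plan is to verify the Birkhoff criterion for the class $\mathcal{B}$ in the statement: closure under subgroups, quotients and Cartesian products. The first two are routine. The condition that all $w$-values be $n$-Engel is the family of laws $[y,_n\,v(x_1,\dots,x_s)^m]\equiv 1$, hence is inherited by subgroups and quotients; and the class of groups possessing an ascending normal series with locally soluble or locally finite factors is closed under subgroups and quotients as well, since the classes of locally soluble and of locally finite groups are each closed under these operations and one may intersect, respectively project, the series. Together with $v(H)\le v(G)$ for $H\le G$ and $v(G/N)=v(G)N/N$ for $N\trianglelefteq G$, this handles $S$ and $Q$, so the entire difficulty is closure under Cartesian products.

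For this, the key is the following structural statement, which I would prove first: \emph{every $H\in\mathcal{B}$ has $w(H)$ locally nilpotent, and there is an integer $c=c(m,n,v)$ such that $\gamma_c(v(H))\le w(H)$}; consequently $v(H)/w(H)$ is nilpotent of class less than $c$ and, being generated by the images of $v$-values, which have order dividing $m$, is of $(m,n,v)$-bounded exponent and locally finite. Granting this, let $\{G_i\}_{i\in I}\subseteq\mathcal{B}$ and $G=\mathrm{Cr}_i G_i$: the first assertion places each $G_i$, hence $G$, in the variety of Theorem~A, so $w(G)$ is locally nilpotent; and from $\gamma_c(v(G))\le\mathrm{Cr}_i\gamma_c(v(G_i))\le\mathrm{Cr}_i w(G_i)$ one gets that, modulo the normal subgroup $T=v(G)\cap\mathrm{Cr}_i w(G_i)$, the group $v(G)$ is nilpotent of class less than $c$. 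It then remains to show that $T$ is locally soluble (equivalently, for the purposes of the series, that it has an ascending normal series with locally soluble or locally finite factors), after which $1\trianglelefteq w(G)\trianglelefteq T\trianglelefteq v(G)$, suitably refined, is the required series. Establishing this last point rigorously is itself delicate: $T$ lies in the Cartesian product $\mathrm{Cr}_i w(G_i)$ of locally nilpotent groups, which need not be locally nilpotent, so one must use both that $w(G)\le T$ is locally nilpotent and the quantitative structural statement (which bounds the finite quotients $V_i/(V_i\cap w(G_i))$ of finitely generated subgroups $V_i\le v(G_i)$, enabling an application of the Restricted Burnside Problem).

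To prove the structural statement I would exploit the hypothesis that $v(H)$ has an ascending normal series with locally soluble or locally finite factors in order to reduce both assertions, for the finitely generated subgroups of $v(H)$ that are relevant, to finite quotients: in a locally finite factor one uses Zorn's theorem together with Baer's theorem (bounded left Engel elements of a finite group lie in its Fitting subgroup), so that a finite group generated by $n$-Engel elements is nilpotent; in a locally soluble factor one uses Gruenberg's theorem that the left Engel elements of a soluble group lie in the Hirsch--Plotkin radical. The local nilpotency of $w(H)$ is then reduced to the residually finite case, where the Wilson--Zelmanov theorem applies, and the bound $\gamma_c(v(H))\le w(H)$ is reduced to bounding, in a \emph{finite} group in which the $v^m$-values are $n$-Engel, the nilpotency class of $v(G)/w(G)$ in terms of $m,n,v$ alone. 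Here the prime-power hypothesis $m=p^k$ is decisive: modulo $w(G)$ the $v$-values have order dividing $p^k$; an $n$-Engel group element induces an $\mathrm{ad}$-nilpotent element, of degree at most $n$, of the associated graded Lie ring; and Zelmanov's theorem on Lie algebras generated by $\mathrm{ad}$-nilpotent elements and satisfying a polynomial identity — the engine, together with the Hall--Higman reduction, behind the solution of the Restricted Burnside Problem — then yields the required uniform bound.

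The main obstacle I anticipate is precisely the extraction of such a \emph{uniform} bound — independent of the index $i$ and, crucially, of the number of generators — from the comparatively weak structural hypothesis on $v(H)$, together with the ensuing need to control the ``locally nilpotent'' pieces in an infinite Cartesian product (the subgroup $T$ above) so that no unbounded nilpotency class or exponent leaks through. The prime-power restriction on $m$ is what makes the Restricted Burnside Problem and Zelmanov's Lie-algebra bounds available; when $m$ is an arbitrary positive integer this mechanism fails, which is why Theorem~C must instead replace $v$ by a product of many $u$-words.
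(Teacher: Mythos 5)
Your verification of closure under subgroups and quotients is fine, and your intention to get local nilpotency of $w(H)$ via the Engel machinery (reduction to residually finite quotients, Wilson--Zelmanov) matches the paper's route. But the pivot of your plan --- the ``structural statement'' that there is $c=c(m,n,v)$ with $\gamma_c(v(H))\le w(H)$ for every $H$ in the class --- is false, not merely hard. The Engel hypothesis concerns $w$-values, which become trivial in $H/w(H)$, so it imposes no constraint there; all that survives is that $v(H)/w(H)$ is generated by elements of order dividing $m$ and has the ascending-series property, and no nilpotency class bound independent of the number of generators can follow from that. Concretely, take $v=x_1$ (a multilinear commutator of weight $1$ by the paper's definition, namely $\gamma_1$), $m=p\ge 5$ a prime, and $H=R(d,p)$ the largest finite $d$-generator group of exponent $p$. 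Then every $w$-value is trivial (hence $n$-Engel), $H$ is finite so certainly has the required series, $w(H)=1$, and $v(H)=H$ has nilpotency class tending to infinity with $d$. So $\gamma_c(v(H))\le w(H)$ fails for every fixed $c$. This is exactly the Restricted Burnside phenomenon: the bounds one can extract from Zelmanov's theorems are bounds on $d$-generator subgroups depending on $d$, never uniform in $d$, and your own closing remark correctly identifies this as the obstacle --- but it is an obstruction to the statement itself, not just to its proof. A second, independent gap is your treatment of $T=v(G)\cap\mathrm{Cr}_i\,w(G_i)$: even granting the (false) nilpotency claim, you do not show that $T$ has an ascending normal series of the required kind, and a subgroup of a Cartesian product of locally nilpotent groups need not be locally nilpotent, so this step is genuinely open in your outline.

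The repair is to abandon uniformity in $d$ and argue locally, which is what the paper does. One first reduces to $v=\delta_k$ (every $\delta_k$-value is a $v$-value, and $\delta_k$-values form a commutator-closed set, which is essential). For a subgroup $H$ of the Cartesian product generated by $d$ $\delta_k$-values, each projection $H_i$ is shown to be (nilpotent of class $\le c(d)$)-by-(finite of order $\le r(d)$) with $c(d),r(d)$ \emph{uniform over $i$ for fixed $d$} (Lemma~\ref{bb}, resting on Proposition~\ref{radical}, which in turn needs the Guralnick--Malle theorem on groups generated by commutator-closed sets of $p$-elements --- this is where the prime-power hypothesis on $m$ enters, not through a Lie-algebra class bound). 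This uniformity over $i$ transfers the nilpotent-by-finite structure to $H$ itself, and then Proposition~\ref{radical} applied to $H$ shows $H/(H\cap w(G))$ is finite; local finiteness of $v(G)/w(G)$ follows without ever producing a global bound.
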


We do not know whether the hypothesis that $m$ is a prime-power is essential in Theorem B. Our next result is an evidence supporting the conjecture that the theorem remains correct with $m$ allowed to be any positive integer. Given a positive integer $s$ and a word $u=u(x_1,\dots,x_k)$, we call the word $$u(x_{11},\dots,x_{1k})u(x_{21},\dots,x_{2k})\cdots u(x_{s1},\dots,x_{sk})$$ of $ks$ variables the product of $s$ $u$-words. It is clear that for any word $u$, positive integer $s$ and a group $G$, the verbal subgroup $u(G)$ coincides with the verbal subgroup $v(G)$, where $v$ is the product of $s$ $u$-words.

\begin{thmC}
Let $m,n$ be positive integers, $u$ a multilinear commutator word and $v$ the product of $896$ $u$-words. Set $w=v^m$. Then the class of all groups $G$ in which the $w$-values are $n$-Engel and the verbal subgroup $u(G)$ has an ascending normal series whose quotients are either locally soluble or locally finite is a variety.
\end{thmC}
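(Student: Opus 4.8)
\bigskip
\noindent\textbf{Proof proposal.}

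Let $\mathcal{C}$ denote the class in the statement and recall that $u(G)=v(G)$. Since ``the $w$-values are $n$-Engel'' is the single law $[y,_n w(x_1,\dots,x_N)]=1$, where $N$ is the number of variables of $w=v^m$, the class $\mathcal{V}$ of all groups satisfying it is a variety, and $\mathcal{C}=\{G\in\mathcal{V}:u(G)\text{ has an ascending normal series with locally soluble or locally finite quotients}\}$. Such a series restricts to subgroups and passes to homomorphic images of $u(G)$, so $\mathcal{C}$ is closed under taking subgroups and quotients; by Birkhoff's theorem it remains only to show that $\mathcal{C}$ is closed under Cartesian products.

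The crux is to replace the qualitative condition on $u(G)$ by a uniform quantitative one. I would aim to prove: \emph{if $G\in\mathcal{C}$, then $u(G)$ is locally nilpotent and, for every positive integer $d$, there is an integer $c=c(m,n,d)$ --- depending only on $m$, $n$ and $d$, hence neither on $G$ nor on the weight of $u$ --- such that every $d$-generated subgroup of $u(G)$ is nilpotent of class at most $c$.} Granting this, closure under Cartesian products follows easily. If $G=\mathrm{Cart}_\lambda G_\lambda$ with all $G_\lambda\in\mathcal{C}$, then $G\in\mathcal{V}$, since the Engel law is inherited by Cartesian products; moreover $u(G)\le\mathrm{Cart}_\lambda u(G_\lambda)$, and any $d$-generated subgroup $H$ of $u(G)$ projects into each $u(G_\lambda)$ onto a $d$-generated --- hence class at most $c(m,n,d)$ --- subgroup, so $H$ satisfies the law $[x_1,\dots,x_{c+1}]=1$ and is nilpotent of class at most $c(m,n,d)$. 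Thus $u(G)$ is locally nilpotent, in particular locally soluble, so the trivial series $1\lhd u(G)$ already witnesses the required property and $G\in\mathcal{C}$.

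To prove the quantitative statement one follows the pattern of Theorems~A and~B: via the ascending series of $u(G)$, climbed by transfinite induction, together with standard residual reductions, the matter is brought down to finite groups, where Zelmanov's solution of the Restricted Burnside Problem and the Wilson--Zelmanov theorem on Engel Lie algebras apply. The one genuinely new ingredient --- and the reason for taking $v$ to be a product of $896$ $u$-words instead of a single multilinear commutator, which would compel $m$ to be a prime power as in Theorem~B --- is combinatorial: the $w$-values comprise $g^m$ for every $g\in u(G)$ that is a product of at most $896$ $u$-values, and in the finite configurations to which the argument reduces every relevant element of $u(G)$ is such a product; for the semisimple sections this rests on the classification of finite simple groups and on Ore-type results writing each element of a finite simple group as a product of boundedly many values of a multilinear commutator word, whereas the soluble and $p$-sections are handled by the same Lie-theoretic devices as in Theorems~A and~B. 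Hence $m$-th powers of elements of $u(G)$ are $n$-Engel; writing $m=p_1^{a_1}\cdots p_r^{a_r}$ and using the pairwise coprimality of the $p_i^{a_i}$, one transforms this --- simultaneously for all the primes $p_i$ --- into the hypotheses of Theorem~B for the multilinear commutator word $u$ and each prime power $p_i^{a_i}$, with $n$ replaced by an $(m,n)$-bounded integer. Theorem~B then places $G$ in a variety carrying the asserted local nilpotency with a class bound, which gives $c(m,n,d)$.

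The principal obstacle is precisely to keep the number of $u$-words --- the constant $896$ --- absolute, that is, independent both of the number $r$ of primes dividing $m$ and of the weight of $u$. This forces the reduction to the prime-power case of Theorem~B to be carried out for all primes dividing $m$ at once, and it rests on a sharp bound for the number of $u$-values needed to express elements of $u(G)$ in the critical finite sections. Everything else --- the Engel arguments, the transfinite induction along the ascending series, and the Lie-theoretic input --- is as in the proofs of Theorems~A and~B; it is the combinatorial step that produces the value $896$.
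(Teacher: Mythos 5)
Your reduction to closure under Cartesian products is fine, but the quantitative statement on which the whole argument rests is false. For a group $G$ in the class, $u(G)$ need not be locally nilpotent: take $u=[x_1,x_2]$, $m=6$ and $G=S_4$. Then every product of $896$ $u$-values lies in $A_4$, which has exponent $6$, so every $w$-value is trivial and in particular $n$-Engel for any $n$; $G$ is finite, hence has the required ascending series; yet $u(G)=A_4$ is not nilpotent. (With $u=x_1$, $G=S_3$, $m=6$ one gets an even simpler counterexample.) What is actually true, and what the paper proves, is that $w(G)$ is locally nilpotent while finitely generated subgroups of $u(G)$ are finite modulo $w(G)$, i.e.\ $u(G)$ is (locally nilpotent)-by-(locally finite). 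Consequently the uniform invariant you need for the Cartesian product step is not a single nilpotency class $c(m,n,d)$ but a pair of bounds as in Lemma \ref{bb}: every subgroup generated by $d$ $\delta_k$-values has a normal nilpotent subgroup of bounded class \emph{and} bounded index. Both bounds are needed: the class bound controls $w(G)$ in the product (via Theorem A), and the index bound is what lets one pass the local finiteness of $u(G)/w(G)$ through the projections (as in Proposition \ref{deltak}). Your proposal, by collapsing everything to local nilpotency, loses the locally finite layer entirely and the trivial series $1\lhd u(G)$ you invoke at the end does not exist.

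The second problem is the mechanism you propose for exploiting the constant $896$. Reducing to Theorem B ``for each prime power $p_i^{a_i}$ dividing $m$'' is unsubstantiated: knowing that $m$-th powers of products of $896$ $u$-values are $n$-Engel does not give you that $p_i^{a_i}$-th powers of single $u$-values are $n'$-Engel for any controlled $n'$, which is what Theorem B would require. The actual role of $896$ is different and more direct: if $H$ is generated by finitely many $\delta_k$-values and $N$ is the subgroup generated by the $w$-values contained in $H$, then in $H/N$ every product of $896$ $\delta_k$-values has order dividing $m$ (since its $m$-th power is a $w$-value), and Lemma \ref{896} --- the Nikolov--Segal-based result on finite groups with this property --- bounds the order of such quotients (Proposition \ref{radical2}). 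This is the replacement for Proposition \ref{radical}, which in Theorem B relied on the Guralnick--Malle theorem and hence on $m$ being a prime power; that is the precise point at which the hypothesis on $m$ is removed.
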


The constant 896 in the statement comes from the results of Nikolov and Segal on commutator width of finite groups \cite{nise}. The proof of Theorem C uses special properties of finite groups in which every product of 896 $u$-values has order dividing $m$ (see Lemma \ref{896} in Section 4 for details).

An inspection of the proofs of Theorems B and C shows that whenever a group $G$ belongs to one of the varieties considered in those theorems, the verbal subgroup $v(G)$ is actually (locally nilpotent)-by-(locally finite) and hence locally (nilpotent-by-finite). This observation leads to some alternative characterizations of the varieties. This is discussed in Section 5.

Most important ingredients of the proofs presented in the article originate from the techniques created by Zelmanov in his solution of the Restricted Burnside Problem. The classification of finite simple groups is used as well.

Throughout the paper we write, ``$\{a,b,\dots\}$-bounded'' to abbreviate ``bo\-unded from above in terms of  $a,b,\dots$ only''.

\section{Proof of Theorem A}

The proof of Theorem A will be quite short.

Let $m,n$ be positive integers, $v$ a multilinear commutator word and $w=v^m$. We denote by $\mathcal W=\mathcal W(m,n,v)$ the class of all groups $G$ in which the $w$-values are $n$-Engel and the verbal subgroup $w(G)$ is locally nilpotent.

An important role in the proof will be played by the following proposition taken from \cite{BSTT}.

\begin{pro}\label{14}
Let $G$ be a residually finite group in which all $w$-values are $n$-Engel. If $G$ is generated by finitely many Engel elements, then $G$ is nilpotent.
\end{pro}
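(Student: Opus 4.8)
The plan is to derive genuine nilpotency from \emph{local} nilpotency, using that the Engel generators make $G$ finitely generated. Write $G=\langle x_1,\dots,x_d\rangle$ with each $x_i$ a left Engel element. Since a finitely generated locally nilpotent group is nilpotent, it suffices to prove that $G$ is locally nilpotent; and for this it is enough to place each $x_i$ in the Hirsch--Plotkin radical $R(G)$, for then $G=\langle x_1,\dots,x_d\rangle\le R(G)$, so that $G=R(G)$ is locally nilpotent. One point must be stressed at the outset: one cannot hope to bound the nilpotency class of the finite quotients of $G$ uniformly. For instance, when $v=\delta_2$ every metabelian group satisfies $w\equiv 1$ vacuously, yet there exist $2$-generated metabelian $p$-groups of arbitrarily large class. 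Hence the hypothesis that $G$ be generated by \emph{genuine} Engel elements of the whole group has to be exploited globally, and it cannot be read off from the individual finite quotients.

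The first step is to show that the verbal subgroup $w(G)$ is locally nilpotent, so that $w(G)\le R(G)$. Here residual finiteness lets me test local nilpotency on the finite quotients of $G$. By Baer's theorem \cite{Rob}, a finite group generated by left Engel elements coincides with its Fitting subgroup, so every finite quotient of $G$ is nilpotent, and by Sylow decomposition the analysis reduces to finite $p$-groups. To such a $p$-group $P$ one associates the restricted Lie algebra $L$ afforded by the Jennings--Zassenhaus filtration: the images of the $v$-values are values of the associated multilinear Lie monomial $\tilde v$, and the class of $P$ is governed by the nilpotency class of $L$. Feeding the hypothesis that the $w=v^m$-values are $n$-Engel into this correspondence shows that all $\tilde v$-values of $L$ are ad-nilpotent of $\{m,n\}$-bounded index, whereupon the Lie-theoretic machinery of Zelmanov created for the Restricted Burnside Problem \cite{ze1,ze2} yields the nilpotency of the verbal part of $L$, and hence the local nilpotency of $w(G)$.

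It remains to handle $\overline G=G/w(G)$, which is generated by the images of the $x_i$ and satisfies the law $w\equiv 1$, that is $v^m\equiv 1$; the aim is to show that each $\overline x_i$ lies in $R(\overline G)$, for then its preimage lies in $R(G)$ (using $w(G)\le R(G)$) and the proof is complete. Now $\overline G$ is a group in which the $v$-values have order dividing $m$, so Zelmanov's solution of the Restricted Burnside Problem in the verbal form of Shumyatsky's theorems \cite{shu1,shu2} makes $v(\overline G)$ locally finite, while $\overline G/v(\overline G)$, satisfying $v\equiv 1$, is locally soluble. Within this locally (soluble-by-finite) quotient the left Engel elements are known to fill up the Hirsch--Plotkin radical, by Gruenberg's theorem on the soluble layer \cite{Rob} and by Baer's theorem on the locally finite layer; assembling these places the $\overline x_i$ in $R(\overline G)$, as required. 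Pulling back and invoking $w(G)\le R(G)$ gives $x_i\in R(G)$, so $G=R(G)$ is locally nilpotent and therefore, being finitely generated, nilpotent.

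The main obstacle is the translation carried out in the second step: passing from the group identity ``the $v^m$-values are $n$-Engel'' to the ad-nilpotency of the $\tilde v$-values in the associated Lie algebra \emph{uniformly in the prime} $p$. The delicate point is the $m$-th power, for when $p$ divides $m$ the power map raises filtration degree and the naive identification of a $v^m$-value with $m\,\tilde v$ collapses; one must then separate the cases $p\nmid m$ and $p\mid m$ and exploit, in the latter, the restricted structure of $L$ together with Zelmanov's sandwich and polynomial-identity techniques. A pleasant feature of this particular statement, by contrast, is that the Engel generation already forces every finite quotient to be nilpotent, so that only the Lie theory of finite $p$-groups is called upon and the classification of finite simple groups can be dispensed with here.
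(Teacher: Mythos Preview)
The paper does not prove this proposition at all: it is imported verbatim from \cite{BSTT}, so there is no in-paper argument to compare against.  Your outline follows a plausible strategy, and your identification of the $p\mid m$ difficulty in the Lie-algebra step is on target.  There is, however, a genuine gap in the third paragraph.

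You assert that in $\overline G=G/w(G)$ the verbal subgroup $v(\overline G)$ is locally finite, citing \cite{shu1,shu2}.  The relevant statement (recorded in the present paper as Lemma~\ref{4.1}(ii)) requires the ambient group to be \emph{soluble}; the bald implication ``$v$-values of bounded order $\Rightarrow v(G)$ locally finite'' is false in general, as it would entail a positive solution to Burnside-type problems.  You have not shown that $\overline G$ is soluble, nor that it is residually finite --- residual finiteness of $G$ does not descend to $G/w(G)$ unless $w(G)$ is closed in the profinite topology, which you have not established.  Without one of these hypotheses the claimed local finiteness of $v(\overline G)$ is unsupported, and with it the placement of the $\overline x_i$ in $R(\overline G)$ collapses.

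A smaller point: even granting $\overline x_i\in R(\overline G)$ and $w(G)\le R(G)$, the step ``hence $x_i\in R(G)$'' is not automatic, since the preimage of a Hirsch--Plotkin radical need not be locally nilpotent (locally nilpotent-by-locally nilpotent is not locally nilpotent in general).  What you actually obtain is that $G$ has a two-step normal series with locally nilpotent factors and is generated by Engel elements; one then needs an extra argument of the flavour of Proposition~\ref{asc} to conclude.  The proof in \cite{BSTT} sidesteps the passage to $G/w(G)$ by working directly in the residually-$p$ quotients of $G$ itself and applying Zelmanov's criterion to the associated Lie ring of the whole group, which keeps residual finiteness available throughout.
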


\begin{lem}\label{bou} There exists a number $\epsilon=\epsilon(d,m,n,v)$ depending only on $d,m,n,v$ such that if $G\in\mathcal W$ is a nilpotent group generated by $d$ elements which are $n$-Engel, then the nilpotency class of $G$ is at most $\epsilon$.
\end{lem}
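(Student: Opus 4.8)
The plan is a standard compactness argument via Cartesian products, the substantive ingredient being Proposition~\ref{14}. Assume the lemma fails. Then for each positive integer $c$ there is a group $G_c\in\mathcal W$ which is nilpotent, is generated by $d$ elements $x_1^{(c)},\dots,x_d^{(c)}$ that are $n$-Engel, and has nilpotency class greater than $c$. Form the Cartesian product $P=\prod_{c\ge 1}G_c$ and let $G$ be the subgroup of $P$ generated by the $d$ elements $x_i=(x_i^{(c)})_{c\ge 1}$, $i=1,\dots,d$.

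I would then verify that $G$ satisfies the hypotheses of Proposition~\ref{14}. First, $G$ is residually finite: each $G_c$ is finitely generated and nilpotent, hence residually finite; a Cartesian product of residually finite groups is residually finite; and residual finiteness passes to subgroups. Secondly, each $x_i$ is an $n$-Engel, in particular a left Engel, element of $G$, since the relation $[y,_{n}x_i]=1$ holds coordinatewise and hence throughout $G$; thus $G$ is generated by finitely many Engel elements. Thirdly, every $w$-value in $G$ is $n$-Engel: the property ``all $w$-values are $n$-Engel'' is defined by the law $[y,_{n}w(x_1,\dots,x_k)]\equiv 1$, so it is inherited by subgroups of Cartesian products of groups possessing it, and each $G_c$ possesses it.

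By Proposition~\ref{14}, $G$ is then nilpotent, of some class $c_0$. For every $c$ the coordinate projection $P\to G_c$ maps $G$ onto $\langle x_1^{(c)},\dots,x_d^{(c)}\rangle=G_c$, so $G_c$ is a homomorphic image of $G$ and therefore has nilpotency class at most $c_0$; choosing $c\ge c_0$ contradicts the choice of $G_c$. Hence the bound $\epsilon=\epsilon(d,m,n,v)$ exists, and since the argument involved only $d$-generated members of $\mathcal W=\mathcal W(m,n,v)$ it depends on $d,m,n,v$ alone. The only delicate point is that Proposition~\ref{14} yields nilpotency of \emph{unspecified} class, but that is enough here because the class cannot grow along the quotient maps $G\twoheadrightarrow G_c$; and the interplay that makes the scheme work is that the hypotheses surviving the passage to $P$ are precisely the \emph{uniform} ones (generation by $d$ $n$-Engel elements, all $w$-values $n$-Engel), not the clause ``$w(G)$ locally nilpotent''. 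The latter in fact is not used at all, being automatic for nilpotent groups and not required by Proposition~\ref{14}.
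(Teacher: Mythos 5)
Your argument is correct and is essentially identical to the paper's own proof: a compactness argument forming the Cartesian product of a putative sequence of counterexamples, verifying the hypotheses of Proposition~\ref{14} for the subgroup generated by the combined generating tuples, and deriving a contradiction from surjectivity of the coordinate projections. The only cosmetic difference is how residual finiteness is justified (the paper notes the subgroup is finitely generated and residually nilpotent, you pass through residual finiteness of the factors), and both routes are valid.
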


\begin{proof}
Suppose that this is false. Then there exists an infinite sequence $(G_i)_{i\geq 1}$ of nilpotent groups satisfying the hypotheses of the lemma such that the nilpotency class of $G_i$ tends to infinity as $i$ tends to infinity. In  each group $G_i$ we choose $d$ generators $x_{i1},\dots,x_{id}$ which are $n$-Engel. Here the elements $x_{i1},\dots,x_{id}$ are not necessarily pairwise distinct. Let $C$ be the Cartesian product of the groups $G_i$ and let $y_1,\dots,y_d$ be the elements of $C$ such that the $i$-th component of $y_j$ is equal to $x_{ij}$ for every $i\geq 1$ and $1\leq j\leq d$. Obviously, each of the elements $y_i$ is $n$-Engel in $C$. Let $H$ be the subgroup of $C$ generated by $y_1,\dots,y_d$. Since every group $G_i$ is a homomorphic image of $H$, the subgroup $H$ is not nilpotent. On the other hand, being a finitely generated residually nilpotent group, $H$ is residually finite. Moreover all $w$-values in $H$ are $n$-Engel. Thus, by Proposition \ref{14}, $H$ is nilpotent. This is a contradiction.
\end{proof}

The proof of Theorem A is now an easy consequence of the previous lemma.

\begin{proof}[\bf Proof of Theorem A]
The class $\mathcal W$ is obviously closed with respect to taking subgroups and quotients of its members. Hence, it is sufficient to show that if $G$ is a Cartesian product of groups $G_i$ from $\mathcal W$, then $G\in\mathcal W$. Of course, the $w$-values are $n$-Engel in $G$ so it remains only to prove that the verbal subgroup $w(G)$ is locally nilpotent. Take a finitely generated subgroup $H$ of $w(G)$. There exist finitely many $w$-values $w_1,\dots,w_d$ such that $H\leq\langle w_1,\dots,w_d\rangle$. Put $K=\langle w_1,\dots,w_d \rangle$ and let $\pi_i$ be the projection from $K$ to $w(G_i)$. By Lemma \ref{bou} there exists a constant $\epsilon$ such that the nilpotency class of $\pi_i(K)$ is at most $\epsilon$ for every $i$. Since the intersection of the kernels of $\pi_i$ is trivial, $K$ is nilpotent of class at most $\epsilon$. In particular, $H$ is nilpotent and $w(G)$ is locally nilpotent.
\end{proof}

\section{Proof of Theorem B}

It will be convenient first to prove Theorem B in the particular case where $v=\delta_k$ is a derived word. A subset $X$ of a group $G$ is called commutator-closed if $[x,y]\in X$ whenever $x,y\in X$. The fact that in any group the set of all $\delta_k$-values is commutator-closed will be used without explicit references. Following Zelmanov's solution of the Restricted Burnside Problem a number of results bounding the order of a finite group have been found. One example of such a result is provided by the next lemma.

\begin{lem}\label{fp}
Let $p$ be a prime and $G$ a finite $p$-group generated by a normal commutator-closed set of elements of order dividing $m$. Suppose additionally that $G$ satisfies an identity $f\equiv 1$ and is $d$-generated for some $d\geq 1$. Then the order of $G$ is $\{d,m,f\}$-bounded.
\end{lem}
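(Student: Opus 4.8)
The plan is to reduce the statement to a bound on the nilpotency class of $G$ and then to obtain that class bound by the Lie-ring method underlying Zelmanov's solution of the Restricted Burnside Problem; this is the same route by which a number of similar order-bounding results in \cite{shu1,shu2} are established.

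I begin with the elementary reductions. An element of a $p$-group whose order divides $m$ in fact has order dividing the $p$-part $p^{e}$ of $m$, so I may assume $p\mid m$ --- otherwise the given generating set, call it $X$, consists of trivial elements and $G=1$ --- whence $p\le m$ and every element of $X$ has order dividing $p^{e}\le m$. Moreover it is enough to bound the nilpotency class of $G$: being a finite $p$-group, $G$ is nilpotent, say of class $c$, and then, using that $X$ is normal and commutator-closed, one sees that $\gamma_{2}(G)$ is again generated by elements of $X$ --- it is the normal closure of the commutators $[x,y]$ with $x,y\in X$, and $[x,y]^{g}=[x^{g},y^{g}]\in X$ --- so that an easy induction on $c$ shows the exponent of $G$ divides $m^{c}$; and a $d$-generated nilpotent group of class $c$ and exponent dividing $m^{c}$ has $\{d,m,c\}$-bounded order. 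Finally, by the Burnside basis theorem I may choose generators $x_{1},\dots,x_{d}\in X$ of $G$, lifting a minimal generating set of $G/\Phi(G)$.

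To bound the class of $G$ I pass to the Lie algebra $L$ over $\mathbb{F}_{p}$ associated with $G$ through the Jennings filtration; it is generated by the images $\bar x_{1},\dots,\bar x_{d}$ of the chosen generators. Two inputs are needed. First, since $G$ satisfies the identity $f\equiv 1$, a theorem of Wilson and Zelmanov ensures that $L$ satisfies a polynomial identity depending only on $f$ and $p$, hence only on $f$ and $m$. Second, for every $g\in G$ of order dividing $p^{e}$ the corresponding homogeneous element $\bar g\in L$ is ad-nilpotent of index at most $2p^{e}-1\le 2m-1$: indeed $(\bar g)^{p^{e}}=0$ in the restricted enveloping algebra of $L$ --- a standard property of the Jennings filtration, since $(g-1)^{p^{e}}=g^{p^{e}}-1=0$ in $\mathbb{F}_{p}[G]$ --- so $\mathrm{ad}\,\bar g$ is the difference of two commuting nilpotent multiplication operators of index at most $p^{e}$. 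Now commutator-closedness of $X$ is used decisively: any left-normed Lie commutator $[\bar x_{i_{1}},\dots,\bar x_{i_{k}}]$ in the generators of $L$ is, when non-zero, the image of the group commutator $[x_{i_{1}},\dots,x_{i_{k}}]$, which lies in $X$ and hence has order dividing $p^{e}$; thus \emph{all} the Lie commutators in the generators of $L$ are ad-nilpotent of index at most $2m-1$.

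At this point Zelmanov's theorem \cite{ze1,ze2} applies: a Lie algebra over a field which is generated by $d$ elements, satisfies a polynomial identity, and all of whose Lie commutators in those generators are ad-nilpotent of index at most $\nu$, is nilpotent of class bounded in terms of $d$, $\nu$ and the identity. Feeding this back through the standard dictionary between $L$ and the lower central series of the finite $p$-group $G$, one obtains that $G$ is nilpotent of $\{d,m,f\}$-bounded class, and the first paragraph then yields the required bound on $|G|$. The principal obstacle is exactly this appeal to Zelmanov's theorem together with the verification of its hypotheses --- above all the uniform ad-nilpotency of all iterated Lie commutators in the generators, which is precisely what commutator-closedness of $X$ provides; the passage from the group identity to a polynomial identity on $L$ (Wilson--Zelmanov) is the other substantial ingredient, and the remaining steps are routine.
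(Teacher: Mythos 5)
Your route is the standard Lie-theoretic one, and it is exactly what the paper's reference to the final paragraph of the proof of Theorem 1.1 in \cite{shu3} points to: reduce to bounding the nilpotency class, pass to the Lie algebra of the Zassenhaus--Jennings--Lazard filtration, extract a polynomial identity via Wilson--Zelmanov, use commutator-closedness to make every Lie commutator in the generators ad-nilpotent of index at most $2m-1$, and invoke Zelmanov's nilpotency theorem for PI Lie algebras. The reductions in your first paragraph (each $\gamma_i(G)/\gamma_{i+1}(G)$ is generated by elements of $X$ and so has exponent dividing $m$; generators of $G$ may be chosen inside $X$ by the Burnside basis theorem) and the verification of the hypotheses of Zelmanov's theorem are correct.

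The gap is in your last sentence. Zelmanov's theorem gives you that $L=L_p(G)$ (or the subalgebra generated by $\bar x_1,\dots,\bar x_d$) is nilpotent of $\{d,m,f\}$-bounded class, and you then assert that ``the standard dictionary between $L$ and the lower central series'' makes $G$ itself nilpotent of bounded class. That dictionary is valid for the Lie ring built on the lower central series, but not for the Jennings filtration --- and the Jennings filtration is the one you are forced to use, since both the Wilson--Zelmanov theorem and your ad-nilpotency computation in the restricted enveloping algebra live there. The implication ``$L_p(G)$ nilpotent of class $c$ implies $G$ nilpotent of $\{c,d,p\}$-bounded class'' is simply false: the finite quotients of $\mathbb{Z}_p\rtimes\mathbb{Z}_p$ (with the generator of the second factor acting as multiplication by $1+p$) are $2$-generated finite $p$-groups of unbounded nilpotency class whose associated algebras $L_p$ have uniformly bounded class. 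The correct continuation, and the actual content of the paragraph of \cite{shu3} being reproduced, is different: bounded nilpotency class of $L_p(G)$ together with $d$-generation yields a powerful characteristic subgroup of $\{d,m,f\}$-bounded index, and one must then bound the order of that subgroup by a further argument exploiting the bounded orders of the elements of $X$ (in the spirit of Lemma \ref{fn} and of the fact that in a powerful $p$-group the exponent is controlled by the orders of generators). Without some such step your proof establishes only that $L_p(G)$ has bounded class, which does not yet bound $|G|$.
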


\begin{proof} The proof can be obtained just reproducing the argument of the final paragraph of the proof of Theorem 1.1 in \cite{shu3}. Arguments of this kind are now considered fairly standard. We therefore omit the details.
\end{proof}

The next lemma is a quantitative version of Lemma 3.4 in \cite{shu3}.

\begin{lem}\label{fn}
Let $G$ be a group generated by a normal commutator-closed set $X$ of elements of finite order dividing $m$. Suppose that $G$ is generated by finitely many elements $x_1,\ldots,x_d\in X$ and let $N$ be a subgroup of $G$ of finite index $r$ such that $X\cap N=1$. Then $G$ is finite and its order is $\{d,m,r\}$-bounded.
\end{lem}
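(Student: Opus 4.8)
The plan is to exploit the hypothesis that $X$ is commutator-closed and normal, so that the conjugates of the $x_i$ and all the commutators one can form from them still lie in $X$, hence still have order dividing $m$. Since $N$ has finite index $r$ in $G$ and $X\cap N=1$, each element of $X$ maps injectively into the finite quotient... but $G/N$ need not be a group, so more care is needed: the point is rather that the coset space $G/N$ has size $r$, and $N$ contains no nontrivial element of $X$. The natural first step is to pass to the normal core $M=\bigcap_{g\in G}N^g$, which is a normal subgroup of $G$ of $\{r\}$-bounded index (at most $r!$) and still satisfies $X\cap M=1$. Replacing $N$ by $M$, we may assume $N$ is normal in $G$ with $X\cap N=1$ and $[G:N]$ being $r$-bounded.

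Next I would observe that, because $X$ is commutator-closed and $N$ is normal, for any $x,y\in X$ the commutator $[x,y]$ belongs to $X\cap [G,G]$; moreover if $x\in X$ then $[x,g]=x^{-1}x^g\in X$ for all $g\in G$ by normality. In particular, for $x\in X$ and $g\in N$ we have $[x,g]\in X$; but we also want to force such commutators into $N$. The key is to bound the number of distinct conjugates of each generator $x_i$: since $X\cap N=1$, distinct elements of $X$ lie in distinct cosets of $N$, so $X$ has at most $r$ elements altogether, and in particular the conjugacy class of each $x_i$ in $G$ has size at most $r$. Hence each $x_i$ has a centralizer of index at most $r$, so $Z=\bigcap_{j=1}^d C_G(x_i)$ has $\{d,r\}$-bounded index, and $Z$ centralizes every $x_i$, so $Z$ is central in $G$. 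Therefore $G/Z(G)$ is finite of $\{d,r\}$-bounded order, and by Schur's theorem $[G,G]=G'$ is finite. But $G'$ is generated by the commutators $[x_i,x_j]^g$, all of which lie in $X$ and hence have order dividing $m$; being finite and generated by such elements, $G'$ has $\{d,m,r\}$-bounded order once one knows it is generated by a $\{d,r\}$-bounded number of elements of order dividing $m$ — which follows since a finite group of bounded order is generated by boundedly many elements. Consequently $G$, which is an extension of the finite group $G'$ by the finite group $G/G'$ (finitely generated abelian, and every $x_iG'$ has order dividing $m$ since $x_i\in X$, hence finite of $\{d,m\}$-bounded order), is itself finite of $\{d,m,r\}$-bounded order.

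The main obstacle I anticipate is the step that passes from "finitely generated and centre of bounded index" to a genuine bound on $|G|$: Schur's theorem gives finiteness of $G'$ but the classical proof gives a bound only in terms of $[G:Z(G)]$, which here is $\{d,r\}$-bounded, so that is fine; the subtler point is controlling $G/G'$, where one must use that $G/G'$ is abelian, finitely generated by $d$ elements, and each generator has order dividing $m$ because it lies in $X$ — this forces $|G/G'|\le m^d$. Assembling these bounds multiplicatively yields the desired $\{d,m,r\}$-bound on $|G|$. One should double-check that every commutator invoked genuinely lies in $X$ (using iterated commutator-closedness and normality) so that the order-dividing-$m$ property is preserved throughout; this bookkeeping, rather than any deep input, is the delicate part. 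No appeal to Lemma \ref{fp} or to Zelmanov's machinery is needed here, since the whole argument is elementary once the finite index hypothesis is in force.
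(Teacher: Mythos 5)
There is a genuine gap at the pivotal step of your argument. You claim that distinct elements of $X$ lie in distinct cosets of $N$, hence $|X|\le r$, hence each $x_i$ has at most $r$ conjugates, so that $\bigcap_i C_G(x_i)$ is a central subgroup of $\{d,r\}$-bounded index. But if $x,x'\in X$ lie in the same coset of $N$, all you obtain is $x^{-1}x'\in N$, and nothing in the hypotheses places $x^{-1}x'$ in $X$: normality of the \emph{set} $X$ gives $x^g\in X$, and commutator-closedness gives $[x,y]\in X$ only when $x$ and $y$ are \emph{both} in $X$. Neither yields closure under $(x,x')\mapsto x^{-1}x'$, and your parenthetical ``$[x,g]=x^{-1}x^g\in X$ for all $g\in G$ by normality'' is exactly this error. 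The claim $|X|\le r$ is in fact false: in $G=(\mathbb{Z}/2)^3$ take $X$ to be the identity together with the four elements whose first coordinate is nonzero, and $N$ the subgroup generated by $(0,1,1)$. Then $X$ is normal and commutator-closed, generates $G$, consists of elements of order dividing $2$, $X\cap N=1$ and $[G:N]=4$, yet $|X|=5$. Since everything downstream (centre of bounded index, Schur's theorem, the bound on $G'$) rests on this step, the proof does not go through as written.

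The paper's proof reaches a ``central quotient'' by a different and genuinely necessary route: after replacing $N$ by its normal core, it uses the identity $[x,y,x]=[x^y,x]$ for $x\in X$ and $y\in N$ to place $[x^y,x]$ in $X\cap N=1$, so each $\langle x^N\rangle$ is abelian of exponent dividing $m$; the product $L=[N,G]$ of the subgroups $[N,x_i]$ is then nilpotent of class at most $d$, of exponent dividing $m^d$, and normal in $G$, and $N/L$ is central in $G/L$. Only in $G/L$ does Schur's theorem apply, after which one bounds $|G/L|$, deduces that $L$ needs only boundedly many generators, and bounds $|L|$. Your final bookkeeping (e.g.\ $|G/G'|\le m^d$ from the images of the $x_i$) is sound, but you first need a correct argument producing a subgroup of bounded index that is central modulo a controlled normal subgroup; the conjugacy-class count does not supply one.
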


\begin{proof}
Without loss of generality, we assume that $N$ is normal in $G$. For any $x\in X$ and $y\in N$ we remark that $[x^y,x]=1$. This follows from the fact that $[x,y,x]=[x^y,x]$ and so $[x^y,x]\in X\cap N=1$. Therefore the subgroup $\langle x^N\rangle$ is abelian of exponent dividing $m$. In particular, the subgroup $[N,x]$ is abelian of exponent dividing $m$. Since
$[N,x]$ is normal in $N$, it follows that the subgroups of the form $[N,x]$  normalize each other. Let $L$ be the product of all subgroups $[N,x]$, where $x$ ranges through the set of generators $x_1,\ldots,x_d\in X$. Of course, $L$ is nilpotent of class at most $d$ and has finite exponent dividing $m^d$. Moreover, $L=[N,G]$ and in particular $L$ is normal in $G$.

The subgroup $N/L$ is contained in the centre of $G/L$ and so the index of $Z(G/L)$ is at most $r$. By Schur's Theorem \cite[Theorem 10.1.4]{Rob}, the commutator subgroup of $G/L$ is finite of $r$-bounded order and we conclude that $G/L$ is finite of $\{d,m,r\}$-bounded order. It follows that $L$ can be generated by $\{d,m,r\}$-boundedly many elements. We know that $L$ has finite exponent dividing $m^d$. Therefore $L$ is finite of $\{d,m,r\}$-bounded order. The lemma follows.
\end{proof}

Given a prime $p$ and a finite group $G$, we denote by $O_p(G)$ the unique maximal normal $p$-subgroup of $G$. In the proposition that follows we will require a recent result of Guralnick and Malle \cite{GM}: {\em if $G$ is a finite group generated by a normal commutator-closed set of $p$-elements, then either $G$ is a $p$-group or $p=5$ and $G/O_5(G)$ is a direct product of copies of $A_5$, the alternating group of degree $5$}.

\begin{pro}\label{radical}
Let $G$ be a group generated by a normal commutator-closed set $X$ of $p$-elements of order dividing $m$. Suppose that $G$ is generated by $d$ elements from $X$ and satisfies an identity $f\equiv1$. If $G$ has an ascending normal series whose quotients are either locally soluble or locally finite, then $G$ is finite of $\{d,m,f\}$-bounded order.
\end{pro}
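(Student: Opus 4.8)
The plan is to reduce the general case to a finite quotient and then apply the structural results already developed. First I would argue that it suffices to prove the proposition for \emph{finite} groups $G$. Indeed, the hypothesis that $G$ has an ascending normal series with locally soluble or locally finite quotients, together with the fact that $G$ is finitely generated (by the $d$ elements from $X$) and satisfies an identity, should force $G$ to be residually finite, or at least to possess a rich supply of finite quotients: each quotient $G/M$ is again generated by a normal commutator-closed set of $p$-elements of order dividing $m$, by $d$ such elements, and satisfies $f\equiv 1$. So if the finite case is known with an $\{d,m,f\}$-bound $B$, every finite quotient of $G$ has order at most $B$, and a standard compactness/residual argument then forces $G$ itself to be finite of order at most $B$.

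Next I would treat the finite case. Let $G$ be finite, generated by a normal commutator-closed set $X$ of $p$-elements of order dividing $m$, $d$-generated from $X$, satisfying $f\equiv 1$. Consider the last term $R$ of a chief-type analysis, or more directly work with the soluble radical and the layer of nonabelian chief factors. The key external input is the Guralnick--Malle theorem: either $G$ is a $p$-group, or $p=5$ and $G/O_5(G)$ is a direct product of copies of $A_5$. In the first case, Lemma \ref{fp} directly gives that $|G|$ is $\{d,m,f\}$-bounded and we are done. So assume the second alternative holds. Then $O_5(G)$ is a $5$-group generated by the normal commutator-closed set $X\cap O_5(G)$ (or rather, $O_5(G)$ is generated by the $5$-elements of $X$ that lie in it together with their conjugates); applying Lemma \ref{fp} to $O_5(G)$, provided it is $d'$-generated for a bounded $d'$, bounds $|O_5(G)|$.

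It then remains to bound the number of copies of $A_5$ in $G/O_5(G)$, i.e.\ to show $\bar G:=G/O_5(G)\cong A_5^{(t)}$ has $t$ bounded in terms of $d,m,f$. Here I would use that $\bar G$ is still $d$-generated and still satisfies the identity $f\equiv 1$: a direct power $A_5^{(t)}$ that satisfies a fixed nontrivial identity $f$ has $t$ bounded in terms of $f$ (since some identity of $A_5^{(t)}$ fails once $t$ exceeds the number of distinct laws forced on direct powers of $A_5$ by $f$ --- concretely, $A_5^{(t)}$ satisfies exactly the laws that hold in $A_5$, so the identity is automatic, and instead boundedness of $t$ comes from the $d$-generation: a $d$-generated subgroup of $A_5^{(t)}$ is a subdirect product, and there are only $\{d\}$-boundedly many $d$-generated subdirect powers of $A_5$). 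Combining the bounds on $|O_5(G)|$ and on $t$ gives the $\{d,m,f\}$-bound on $|G|$ in the finite case, and with the reduction above this proves the proposition.

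The main obstacle I anticipate is the passage through the non-$p$-group case: one must verify that $O_5(G)$ is generated by a \emph{bounded} number of elements of $X$ (so that Lemma \ref{fp} applies with a bounded parameter), and one must handle the interaction between $X$ and the $A_5$-layer --- in particular, showing that elements of $X$ either lie in $O_5(G)$ or project nontrivially in a controlled way. The boundedness of the number of $A_5$-factors via $d$-generation is essentially the classical fact that finitely generated groups have only finitely many subgroups of bounded index / only boundedly many ways to be a subdirect product of a fixed finite simple group, so that part should be routine; it is the reduction to finite quotients and the extraction of a bounded generating set for $O_5(G)$ that will require care, likely invoking Lemma \ref{fn} to control the finite-index situation.
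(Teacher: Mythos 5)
Your treatment of the finite case is essentially the paper's argument: Guralnick--Malle reduces to either a $p$-group (handled by Lemma \ref{fp}) or $p=5$ with $G/O_5(G)$ a direct power of $A_5$, whose number of factors is bounded via $d$-generation and Hall's theorem on counting subgroups of given index; the remaining difficulty you flag --- obtaining a bounded generating set for the $5$-core --- is resolved in the paper exactly as you anticipate, by setting $Y=X\cap O_5(G)$ and applying Lemma \ref{fn} to $G/\langle Y\rangle$ with $N$ the image of $O_5(G)$ (so that the image of $X$ meets $N$ trivially), which bounds $|G/\langle Y\rangle|$, hence the number of generators of $\langle Y\rangle$, and then Lemma \ref{fp} bounds $|\langle Y\rangle|$.

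The genuine gap is in your reduction of the general case to the finite case. You assert that the hypotheses ``should force $G$ to be residually finite,'' and your ``standard compactness/residual argument'' needs exactly that: if every finite quotient has order at most $B$ but $G$ is merely finitely generated, the finite residual $R$ (the intersection of all finite-index subgroups) has index at most $B$, and nothing so far prevents $R$ from being a nontrivial infinite group. Residual finiteness is not a consequence of having an ascending normal series with locally soluble or locally finite quotients --- there are finitely generated soluble groups that are not residually finite --- so this step cannot be waved through. The paper closes it by a separate argument that uses the ascending series a second time: assuming $R\neq 1$, note $R$ is finitely generated (finite index in a finitely generated group), intersect the given series with $R$, and take the minimal ordinal $\beta$ with $R_\beta=R$; finite generation rules out $\beta$ being a limit, and then $R/R_{\beta-1}$ is either finite (impossible, since then $G/R_{\beta-1}$ would be finite and $R\leq R_{\beta-1}$) or soluble, whence $R'R_{\beta-1}<R$, so $R'<R$ and $G/R'$ is residually finite, hence finite by the bound on finite quotients --- contradicting $R'<R\leq R$. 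Without some argument of this kind your proof only bounds the finite quotients of $G$, not $G$ itself.
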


\begin{proof}
Assume first that $G$ is finite. Then, by Lemma \ref{fp}, we may also assume that $G$ is not a $p$-group. It follows that $p=5$ and $G/O_5(G)$ is a direct product of copies of $A_5$. Since $G/O_5(G)$ is $d$-generated, the number of subgroups of any given index depends only on $d$ \cite[Theorem 7.2.9]{mhall}. Hence, $G/O_5(G)$ is a product of boundedly many copies of the group $A_5$ and in particular the order of $G/O_5(G)$ is $d$-bounded. Set $Y=X\cap O_5(G)$. Applying Lemma \ref{fn} to $G/\l Y\r$, we obtain that the order of $G/\l Y\r$ is $\{d,m\}$-bounded. Therefore $\l Y\r$ has a $\{d,m\}$-bounded number of generators and so, by Lemma \ref{fp}, its order is $\{d,m,f\}$-bounded. This proves the result in the case where $G$ is finite.

Consider now the general case. Let $R$ be the finite residual of $G$, i.e. the intersection of all subgroups of $G$ of finite index. Since any finite quotient of $G$ has bounded order, the index of $R$ in $G$ is finite. If $R=1$, we are done. Assume $R\neq 1$. Then $R$ is finitely generated and therefore $R$ cannot be a union of an increasing chain of proper subgroups. Let $R=\bigcup_{\alpha<\beta} R_\alpha$ where $1\leq R_1\leq R_2\leq\ldots\leq R_\beta=R$ and $R_{\alpha+1}/R_\alpha$ is either locally soluble or locally finite. Suppose that $\beta$ is a minimal ordinal such that $R_{\beta}=R$. It follows that $\beta$ is not a limit ordinal and hence $R/R_{\beta-1}$ is either finite or soluble. If $R/R_{\beta-1}$ is finite, then so is $G/R_{\beta-1}$ and consequently $R=R_{\beta-1}$, a contradiction. Thus $R/R_{\beta-1}$ is soluble and so $R'R_{\beta-1}\neq R$. Here $R'$ is the commutator subgroup of $R$. In particular $R'<R$ and $G/R'$ is residually finite. We conclude that $G/R'$ is finite, which yields the final contradiction.
\end{proof}

In any group $G$ there exists a unique maximal normal locally nilpotent subgroup $F(G)$ (called the Hirsch-Plotkin radical) containing all normal locally nilpotent subgroups of $G$ \cite[12.1.3]{Rob}. In general, $F(G)$ is a subset of the set $L(G)$ of all (left) Engel elements \cite[12.3.2]{Rob}. However, in \cite{Pl}, Plotkin proved that $F(G)$ coincides with $L(G)$ whenever $G$ has an ascending series whose quotients locally satisfy the maximal condition. We will now prove a related result for the class of groups having an ascending normal series with locally soluble or locally finite quotients.

We will require the following lemma taken from \cite{BSTT}.

\begin{lem}\label{Engel}
Let $G$ be a group generated by a set of Engel elements and $N$ a locally soluble normal subgroup of $G$. Then $[N,G]$ is locally nilpotent.
\end{lem}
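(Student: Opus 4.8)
The plan is to combine two classical facts: Gruenberg's theorem that in a soluble group the set of left Engel elements coincides with the Hirsch--Plotkin radical, and the Hirsch--Plotkin theorem that the join of locally nilpotent normal subgroups is again locally nilpotent (so that a normal subgroup of a group is locally nilpotent precisely when it lies in the Hirsch--Plotkin radical). Since $[N,G]\le N$, the subgroup $[N,G]$ is automatically locally soluble; the whole point is to upgrade ``locally soluble'' to ``locally nilpotent'', and the device that makes this possible is the Engel condition, which forces certain orbits to be finitely generated.

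First I would perform the routine reductions. As local nilpotency is detected on finitely generated subgroups, it suffices to show that an arbitrary finitely generated $H\le[N,G]$ is nilpotent; writing the generators of $H$ as products of commutators $[n,g]$ with $n\in N$, $g\in G$ and expanding each $g$ over a finite subset $X_0$ of the given generating set $X$ of left Engel elements, one gets $H\le[N,G_0]$ with $G_0=\langle X_0\rangle$ finitely generated. Using the identities $[a,bc]=[a,c][a,b]^{c}$ and $[a,c^{-1}]=\bigl([a^{c^{-1}},c]\bigr)^{-1}$, together with the fact that conjugates of left Engel elements are left Engel, one reduces further to the case where $H$ lies in a subgroup generated by finitely many commutators $[m,x]$ with $m\in N$ and $x$ a left Engel element. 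The model case — which already carries the main idea — is that of a single Engel element.

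So suppose $y$ is a left Engel element normalising a locally soluble normal subgroup $N$, and let $H$ be a finitely generated subgroup of $[N,\langle y\rangle]$, say $H\le\langle[n_1,y],\dots,[n_k,y]\rangle$; then $A:=\langle n_1,\dots,n_k\rangle$ is a finitely generated, hence soluble, subgroup of $N$. Since $y$ is left Engel there is an $m$ with $[n_i,_m y]=1$ for all $i$. In the word $[a,_m y]$ the conjugate $a^{y^m}$ occurs exactly once, and likewise $a$ occurs exactly once, so from $[n_i,_m y]=1$ one can solve for $n_i^{y^m}$ and for $n_i^{y^{-1}}$ inside $\langle n_i^{y^j}:0\le j\le m-1\rangle$. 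Iterating upwards and downwards shows that the $\langle y\rangle$-invariant subgroup $A^{*}:=\langle A^{y^j}:j\in\mathbb Z\rangle$ actually equals $\langle n_i^{y^j}:1\le i\le k,\ 0\le j\le m-1\rangle$, hence is finitely generated, hence (being contained in $N$) soluble. Therefore $S:=\langle A^{*},y\rangle$ is soluble — an extension of the soluble normal subgroup $A^{*}$ by a cyclic group — and $y$ is a left Engel element of $S$, so Gruenberg's theorem gives $y\in F(S)$, the locally nilpotent Hirsch--Plotkin radical of $S$. Since $H\le[A^{*},\langle y\rangle]\le\langle y^{S}\rangle\le F(S)$, the finitely generated group $H$ is nilpotent. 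Feeding this back — via the Hirsch--Plotkin theorem, applied to the locally nilpotent normal subgroups into which $[N,G]$ decomposes — yields that $[N,G]$ is locally nilpotent.

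The step I expect to be the main obstacle is exactly the passage from \emph{locally} soluble to soluble: without the Engel hypothesis the orbit $A^{*}$ of a finitely generated subgroup of $N$ can have unbounded derived length, so the argument genuinely hinges on the ``unique top term'' feature of the identity $[n,_m y]=1$ to collapse $A^{*}$ down to a finitely generated subgroup. In the general case one must also run this collapse in the presence of several Engel generators at once and verify that the resulting locally nilpotent subgroups are positioned normally enough inside $[N,G]$ for the Hirsch--Plotkin theorem to apply; organising this bookkeeping is where the real work of the proof lies.
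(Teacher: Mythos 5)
First, a remark on the target: the paper does not prove this lemma at all --- it is quoted verbatim from \cite{BSTT} --- so there is no internal proof to compare against, and your argument has to stand on its own.

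Your single-Engel-element core is correct and genuinely captures a useful idea: in $[a,{}_m y]$ the extreme conjugates $a$ and $a^{y^m}$ each occur exactly once, so the relation $[n_i,{}_m y]=1$ collapses the $\langle y\rangle$-orbit of a finitely generated $A\le N$ to a finitely generated, hence soluble, subgroup $A^{*}$; then $S=\langle A^{*},y\rangle$ is soluble, Gruenberg's theorem puts $y$ in $F(S)$, and $\langle y^{S}\rangle\le F(S)$ swallows the relevant commutators. All of that checks out.

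The gap is the passage from one Engel element to several, which you defer to ``bookkeeping''; it is not bookkeeping, it is the lemma. Two distinct things break. First, the collapse itself does not iterate: for two Engel elements $y_1,y_2$ the normal closure $A^{\langle y_1,y_2\rangle}$ requires closing under arbitrary alternating words $y_1y_2y_1\cdots$, while the Engel identity only controls conjugation by powers of a \emph{single} element, so there is no finitely generated soluble $S$ containing all the data, and $\langle y_1,y_2\rangle$ itself may well be free, so $\langle A^{*},y_1,y_2\rangle$ is not soluble and Gruenberg does not apply. Second, the proposed patch via Hirsch--Plotkin fails: the locally nilpotent subgroups $\langle y^{S}\rangle$ you produce live inside ad hoc soluble groups $S$ depending on the chosen finite data; they are not normal in $[N,G]$, nor do they normalize one another, and the Hirsch--Plotkin theorem only handles joins of normal (or at least mutually normalizing/ascendant) locally nilpotent subgroups --- a join of arbitrary locally nilpotent subgroups can be any group whatsoever, since every group is a join of cyclic subgroups. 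A finitely generated subgroup of $[N,G]$ sits inside $\langle [m_1,z_1],\dots,[m_t,z_t]\rangle$ with \emph{distinct} Engel elements $z_i$, and nothing in your argument makes that subgroup nilpotent. Until the several-generator case is handled by an actual mechanism (in \cite{BSTT} this requires further work beyond the one-element trick), the proof is incomplete at its decisive step.
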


\begin{pro}\label{asc}
Let $G$ be a group with an ascending normal series whose quotients are either locally soluble or locally finite. Then the set of all Engel elements of $G$ form a locally nilpotent subgroup.
\end{pro}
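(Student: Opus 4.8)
The plan is to show that the set $L(G)$ of left Engel elements is a subgroup which is locally nilpotent, and the natural candidate for that subgroup is the Hirsch-Plotkin radical $F(G)$. Since $F(G)\subseteq L(G)$ always holds, the crux is to prove the reverse inclusion $L(G)\subseteq F(G)$; once this is established, $L(G)=F(G)$ is automatically a locally nilpotent (normal) subgroup and we are done. So fix an Engel element $x\in G$; I want to place $x$ inside a normal locally nilpotent subgroup of $G$.

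First I would reduce to the case that $G$ is generated by Engel elements: replace $G$ by $\langle L(G)\rangle$. Note that an ascending normal series of $G$ with locally soluble or locally finite quotients intersects down to give such a series on any subgroup (intersecting each term with $\langle L(G)\rangle$ and refining), and every element of $L(G)$ is still a left Engel element of the subgroup $\langle L(G)\rangle$; so the hypotheses persist. Now let $1=G_0\trianglelefteq G_1\trianglelefteq\cdots\trianglelefteq G_\beta=G$ be the ascending normal series, each $G_{\alpha+1}/G_\alpha$ locally soluble or locally finite. I would prove by transfinite induction on $\alpha$ that $G_\alpha$ is locally nilpotent. The limit stages are immediate since an ascending union of locally nilpotent normal subgroups is locally nilpotent (this is exactly the closure property defining $F(G)$). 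For the successor step, assume $N:=G_\alpha$ is locally nilpotent and consider $M:=G_{\alpha+1}$, so $M/N$ is locally soluble or locally finite and $M\trianglelefteq G$ with $G$ generated by Engel elements. I must show $M$ is locally nilpotent.

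The key step here is to handle the two cases for $M/N$. If $M/N$ is locally soluble: since $N$ is locally nilpotent and normal in $G$ and $G$ is generated by Engel elements, Lemma \ref{Engel} gives that $[N,G]$ is locally nilpotent; but more to the point I want to climb through the solubility of $M/N$. I would argue that a finitely generated subgroup $H\le M$ lies in $N\langle h_1,\dots,h_t\rangle$ for finitely many $h_i\in M$, and the image of $\langle h_1,\dots,h_t\rangle$ in $M/N$ is a finitely generated soluble group, hence has a finite derived length $\ell$; then one lifts this through a finite chain, at each stage applying Lemma \ref{Engel} to the locally soluble (indeed soluble-of-bounded-length) layer sitting over the locally nilpotent part accumulated so far — here one uses that each such layer is a locally soluble normal subgroup of the relevant subgroup generated by Engel elements, so its commutator with that subgroup is locally nilpotent, and an extension of a locally nilpotent group by a locally nilpotent central-ish piece built this way is locally nilpotent. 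If instead $M/N$ is locally finite, I would pass to a finitely generated $H\le M$: the image $\bar H$ in $M/N$ is finite, so $H$ has a subgroup $H_0$ of finite index with $H_0\le N$ after intersecting, more precisely $H\cap N$ has finite index in $H$ and is finitely generated hence nilpotent; then $H$ is finite-over-nilpotent and generated by (finitely many) Engel elements of $G$, so one invokes an $n$-Engel-type local nilpotency argument — or, more carefully, since each generator of $H$ is a left Engel element of $G$ hence of $H$, and $H$ is finitely generated with a nilpotent normal subgroup of finite index, $H$ is nilpotent by a standard argument (a finitely generated Engel group which is finite-by-nilpotent is nilpotent). The main obstacle I anticipate is exactly this successor step in the locally finite case, where one does not have a bound on the Engel degree and must combine the finiteness of $H/(H\cap N)$ with the Engel condition to force nilpotency of $H$; I expect the clean way is to reduce to $H$ residually finite (it is, being finitely generated and nilpotent-by-finite) and then use that a finitely generated residually finite group generated by Engel elements, all of whose finite quotients are nilpotent of bounded class, is nilpotent — establishing the bounded class from the bounded structure of $H/(H\cap N)$ and the nilpotency of $H\cap N$.
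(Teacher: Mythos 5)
Your opening reduction to $H=\langle L(G)\rangle$ (using that $F(G)\subseteq L(G)$ always holds and that the ascending normal series restricts to $H$) matches the paper. But the transfinite induction you then propose --- that each term $G_\alpha$ of the series is locally nilpotent --- is not actually carried out, and both successor steps hide genuine gaps at exactly the points where the difficulty of the proposition lies. In the locally finite case you take an arbitrary finitely generated $K\le M=G_{\alpha+1}$ with $K\cap N$ nilpotent of finite index and assert that $K$ is ``generated by (finitely many) Engel elements of $G$''. It is not: only the whole group $H$ is generated by Engel elements; an element of $M$ is merely a product of Engel elements, and the subgroup those Engel elements generate need not lie inside $M$, so you cannot stay within the layer you are inducting on. Without Engel generators for $K$ itself, ``finitely generated and nilpotent-by-finite, hence nilpotent'' is simply false (any finite non-nilpotent group is a counterexample). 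In the locally soluble case, the claim that ``an extension of a locally nilpotent group by a locally nilpotent central-ish piece built this way is locally nilpotent'' is precisely what needs proof: an extension of a locally nilpotent group by an abelian, even central-modulo-the-kernel, quotient need not be locally nilpotent (the infinite dihedral group is abelian-by-(order $2$)). Some engine combining the Engel condition with a maximal condition is indispensable here, and you never supply one.

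The paper closes both gaps at once by \emph{not} inducting term by term. It applies Lemma \ref{Engel} inside $H$ to each locally soluble layer $M/N$ of the series (inserting $[M,H]N$ between $N$ and $M$: the lower quotient is locally nilpotent by the lemma, the upper quotient is central in $H/[M,H]N$, hence abelian), thereby refining to an ascending normal series of $H$ all of whose quotients are locally finite or locally nilpotent. Finitely generated subgroups of such quotients satisfy the maximal condition, so Plotkin's theorem --- quoted just before the proposition: $L(H)=F(H)$ for groups with an ascending series whose quotients locally satisfy max --- applies to $H$ in one stroke and gives $H=\langle L(H)\rangle\le F(H)$, i.e.\ $H$ locally nilpotent. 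That appeal to Plotkin's theorem (or an equivalent Baer/Gruenberg-type result) is the ingredient missing from your argument; without it the successor steps cannot be closed.
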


\begin{proof}
It is enough to prove that the subgroup $H$ generated by all Engel elements of $G$ is locally nilpotent. Clearly $H$ has an ascending normal series whose quotients are either locally soluble or locally finite. Then Lemma \ref{Engel} can be applied to each locally soluble quotient of the series producing a refined ascending normal series with locally finite or locally nilpotent quotients. Thus $H$ is locally nilpotent, by Plotkin's theorem.
\end{proof}

Let $m$ be a prime-power, $n$ a positive integer, $v$ a multilinear commutator word and $w=v^m$. We denote by $\mathcal V=\mathcal V(m,n,v)$ the class of all groups $G$ in which the $w$-values are $n$-Engel and the verbal subgroup $v(G)$ has an ascending normal series whose quotients are either locally soluble or locally finite.

\begin{lem}\label{bb} There exist numbers $c=c(d,k,m,n)$ and $r=r(d,k,m,n)$ depending only on $d,k,m,n$ such that any subgroup generated by $d$ $\delta_k$-values in a group $G\in\mathcal V(m,n,\delta_k)$ has a normal nilpotent subgroup of class at most $c$ and index at most $r$.
\end{lem}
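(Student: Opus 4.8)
The plan is to run a Cartesian-product/contradiction argument analogous to Lemma \ref{bou}, but now using the finiteness result of Proposition \ref{radical} in place of Proposition \ref{14}. Suppose the statement fails. Then for every choice of bounds $(c,r)$ there is a group $G\in\mathcal V(m,n,\delta_k)$ and $d$ $\delta_k$-values generating a subgroup that has no normal nilpotent subgroup of class $\le c$ and index $\le r$ simultaneously. Running this over all pairs $(c,r)$ produces an infinite sequence $(G_i)$ of groups in $\mathcal V(m,n,\delta_k)$, together with $\delta_k$-values $x_{i1},\dots,x_{id}$ of $G_i$ generating subgroups $H_i=\langle x_{i1},\dots,x_{id}\rangle$ for which the best available $(c,r)$ tends to infinity. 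As in Lemma \ref{bou}, form the Cartesian product $C=\mathrm{Cr}_i\,G_i$, let $y_j\in C$ have $i$-th component $x_{ij}$, and set $H=\langle y_1,\dots,y_d\rangle$. Since the set of $\delta_k$-values is commutator-closed and normal (it is permuted by conjugation), $H$ is generated by $d$ elements of the normal commutator-closed subset $X$ of all $\delta_k$-values of $C$, and each $y_j$ lies in the verbal subgroup $v(C)$, which has an ascending normal series with locally soluble or locally finite quotients; hence so does $H$.

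The next step is to pass to a finite-order situation. Each $w$-value $y_j^m$ is an $n$-Engel element of $C$; more importantly each $y_j^m$ is a $w$-value of $C$ and therefore, by Proposition \ref{asc} applied inside $v(C)$ (or directly: since $w$-values are $n$-Engel they are Engel elements), the $m$-th powers of $\delta_k$-values generate a locally nilpotent subgroup. We want the quotient $\overline H=H/\langle (y_j^m)^{\,H}\rangle$ to be generated by $d$ elements of a normal commutator-closed set of $p$-elements of order dividing $m$ (here $m=p^a$ is the prime-power). Indeed modulo the normal closure of the $m$-th powers of its $\delta_k$-generators, every $\delta_k$-value has order dividing $m$; one checks that $\bar X$, the image of the $\delta_k$-values, is still normal and commutator-closed, and since $m$ is a prime-power these are $p$-elements. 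Moreover $\overline H$ inherits an ascending normal series with locally soluble or locally finite quotients.

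Now I would produce a nontrivial identity satisfied by $\overline H$. This is where Lemma \ref{bou} (Theorem A's quantitative bound) enters: the $w$-values of $C$ are $n$-Engel, so in any finite quotient of $\overline H$ that is, say, $d'$-generated the relevant $w$-subgroup is nilpotent of $(d',m,n,\delta_k)$-bounded class; combined with the commutator-closed structure this forces $\overline H$ — or rather each of its finite quotients — to satisfy some fixed law $f\equiv1$ depending only on $d,k,m,n$. (Alternatively, the identity $[\,g,_n x^m]\equiv 1$ on $\delta_k$-values, pushed through the multilinear structure, already gives a group identity once we know the generators have bounded order; the precise choice of $f$ is a routine but slightly delicate point.) Once $\overline H$ satisfies a fixed identity $f\equiv 1$, is generated by $d$ elements of a normal commutator-closed set of $p$-elements of order dividing $m$, and has an ascending normal series with locally soluble or locally finite quotients, Proposition \ref{radical} applies and yields that $\overline H$ is finite of $\{d,m,n,k\}$-bounded order, say at most $r_0$. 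Meanwhile the kernel $\langle (y_j^m)^{H}\rangle$ is the normal closure of $d$ $n$-Engel elements inside the locally nilpotent subgroup generated by all $w$-values of $C$ restricted to $H$; being finitely generated and locally nilpotent it is nilpotent, and by Lemma \ref{bou} its class is $\{d,m,n,k\}$-bounded, say at most $c_0$. Thus $H$ has a normal nilpotent subgroup of class $\le c_0$ and index $\le r_0$, with $c_0,r_0$ depending only on $d,k,m,n$ — and since each $H_i$ is a quotient of $H$, the same bounds $(c_0,r_0)$ work for every $H_i$, contradicting the choice of the sequence.

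The main obstacle will be the second and third steps: verifying that the image $\bar X$ of the $\delta_k$-values in $H/\langle(y_j^m)^H\rangle$ really is a normal commutator-closed set of elements of order dividing $m$ (this needs the commutator identities for $\delta_k$-values together with control of how $m$-th powers interact with commutators), and extracting an \emph{honest group identity} $f\equiv1$ — valid in all of $\overline H$, not just abstractly — from the hypothesis that $w$-values are $n$-Engel and the generators have bounded order. The locally-nilpotent kernel being finitely generated hence nilpotent of bounded class is the point where Lemma \ref{bou} is reused; everything else is a transcription of the Cartesian-product machinery already deployed in Section 2.
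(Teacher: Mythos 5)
Your proposal has the right ingredients (Proposition \ref{radical} for bounded index, local nilpotency plus Lemma \ref{bou} for bounded class), but the Cartesian-product wrapper introduces a circularity that breaks the argument. To run your scheme you must know that $H\leq C=\mathrm{Cr}_i\,G_i$ (equivalently, that $v(C)$) has an ascending normal series with locally soluble or locally finite quotients; but closure of this property under Cartesian products is precisely what Proposition \ref{deltak} and Theorem B are trying to establish, and their proofs rest on Lemma \ref{bb}. You cannot assume it here. The compactness device is also unnecessary: Propositions \ref{radical} and \ref{asc} and Lemma \ref{bou} already come with explicit bounds, so the lemma can be (and in the paper is) proved directly inside a single $G\in\mathcal V(m,n,\delta_k)$, where the ascending-series hypothesis on $v(G)$ is available by assumption. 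Incidentally, the identity $f\equiv 1$ you worry about is not delicate at all: $G$ satisfies the law $[y,{}_n\,\delta_k(x_1,\dots,x_{2^k})^m]\equiv 1$ by the hypothesis that $w$-values are $n$-Engel, and this law passes to every section of $G$.

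Two further steps are genuinely gapped. First, your kernel $\langle (y_j^m)^{H}\rangle$ is too small: modulo it only the generators $\bar y_j$ (and their conjugates) are guaranteed to have order dividing $m$, whereas Proposition \ref{radical} needs the whole normal commutator-closed set $\bar X$ of $\delta_k$-values in $\overline H$ to consist of elements of order dividing $m$; an arbitrary $\delta_k$-value lying in $H$ has its $m$-th power equal to some $w$-value that need not belong to the normal closure of the $y_j^m$. The correct choice is $N=\langle\,\text{all $w$-values of $G$ contained in $H$}\,\rangle$, as in the paper. Second, to bound the nilpotency class of $N$ you invoke Lemma \ref{bou}, but that lemma requires a group generated by $d'$ elements \emph{each of which is $n$-Engel}; $N$ is $d'$-generated (by Schreier, since it has bounded index) and is generated by some possibly large set of $n$-Engel $w$-values, and these two facts do not immediately combine. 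The paper bridges this by noting that $N$, being finitely generated and nilpotent (via Proposition \ref{asc}), is residually finite, and then applying the Burnside Basis Theorem to each finite quotient of prime-power order to extract $d'$ generators that are $n$-Engel, after which Lemma \ref{bou} bounds the class of each such quotient and hence of $N$. Without some such step your appeal to Lemma \ref{bou} does not go through.
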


\begin{proof} Choose a subgroup $H$ generated by $d$ $\delta_k$-values in $G$. Let $N$ be the subgroup generated by all $w$-values of $G$ contained in $H$. Then, by Proposition \ref{radical}, $H/N$ is finite of $\{d,k,m,n\}$-bounded order. It follows that $N$ can be generated by boundedly many elements, say $d'$. We see that $N$ is contained in a subgroup generated by finitely many Engel elements and so, by Proposition \ref{asc}, $N$ is nilpotent. In particular $N$ is residually finite. Let $Q$ be a quotient of $N$ having finite prime-power order. We know that, on the one hand $Q$ is generated by at most $d'$ elements and, on the other hand $Q$ is generated by $n$-Engel elements. Since $Q$ has prime-power order, we are in a position to use the Burnside Basis Theorem \cite[Theorem 5.3.2]{Rob} and conclude that $Q$ is generated by $d'$ elements which are $n$-Engel. Now Lemma \ref{bou} implies that $Q$ is nilpotent of $\{d,k,m,n\}$-bounded class and, consequently, so is $N$.
\end{proof}

\begin{pro}\label{deltak}
The class $\mathcal V=\mathcal V(m,n,\delta_k)$ is a variety.
\end{pro}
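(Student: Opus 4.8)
The plan is to verify Birkhoff's criterion: since $\mathcal V=\mathcal V(m,n,\delta_k)$ is plainly closed under taking subgroups and quotients, it suffices to prove that the Cartesian product $G=\prod_i G_i$ of an arbitrary family of groups $G_i\in\mathcal V$ again lies in $\mathcal V$. The $w$-values of $G$ are $n$-Engel, being so coordinatewise, so the whole point is to exhibit inside $\delta_k(G)=v(G)$ a normal subgroup which is locally nilpotent and has locally finite quotient. I would take this subgroup to be $w(G)$, so that $1\le w(G)\le\delta_k(G)$ is the required (finite) ascending normal series, with a locally soluble factor and a locally finite one.

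The first and principal step is to upgrade Lemma \ref{bb} from the factors $G_i$ to $G$ itself: I claim that any subgroup $H\le G$ generated by $d$ $\delta_k$-values has a normal nilpotent subgroup $\hat M$ of class at most $c(d,k,m,n)$ and index at most $r_1(d,k,m,n)$. Indeed, each projection $\pi_i(H)$ is generated by $d$ $\delta_k$-values of $G_i\in\mathcal V$, so Lemma \ref{bb} supplies a normal nilpotent subgroup $M_i$ of $\pi_i(H)$ of class $\le c$ and index $\le r$, with $c,r$ independent of $i$. Put $\hat M=H\cap\prod_i M_i$. Then $\hat M$ is normal in $H$, embeds in $\prod_i M_i$ and is therefore nilpotent of class $\le c$, while $H/\hat M$ embeds in $\prod_i\pi_i(H)/M_i$, a Cartesian product of groups of order $\le r$. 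A $d$-generated subgroup of such a product is finite of $\{d,r\}$-bounded order: by M. Hall's bound on the number of subgroups of a given index in a $d$-generated group, the intersection of all its subgroups of index $\le r$ has $\{d,r\}$-bounded index, and this intersection is trivial because any nontrivial element lies outside the kernel of some coordinate projection, which is a subgroup of index $\le r$. Hence $[H:\hat M]\le r_1(d,k,m,n)$.

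Granting this, I would first see that $w(G)$ is locally nilpotent. A finitely generated subgroup $L$ of $w(G)$ lies in $H_0=\langle w_1,\dots,w_d\rangle$ for suitable $w$-values $w_j$. For each $i$ the element $\pi_i(w_j)$ is a $w$-value of $G_i$, hence $n$-Engel in $G_i$ and a fortiori in $v(G_i)$; since $G_i\in\mathcal V$, Proposition \ref{asc} applies to $v(G_i)$, so $\pi_i(H_0)$, being $d$-generated inside the locally nilpotent subgroup formed by the Engel elements of $v(G_i)$, is nilpotent. Then $\pi_i(H_0)$ belongs to $\mathcal W(m,n,\delta_k)$, being nilpotent with $n$-Engel $w$-values, so Lemma \ref{bou} bounds its class by $\epsilon(d,m,n,\delta_k)$ uniformly in $i$; therefore $H_0\hookrightarrow\prod_i\pi_i(H_0)$ is nilpotent of class $\le\epsilon$, and $L$ is nilpotent. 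Next I would show $\delta_k(G)/w(G)$ is locally finite. A finitely generated subgroup of it has the form $Hw(G)/w(G)$ where $H=\langle g_1,\dots,g_d\rangle$ with $g_j$ $\delta_k$-values of $G$. By the first step $H$ is nilpotent-by-finite with parameters bounded in $d,k,m,n$, so both $H$ and its quotient $\bar H=Hw(G)/w(G)$ satisfy an explicit identity $f\equiv 1$ depending only on $d,k,m,n$, and $\bar H$, being nilpotent-by-finite, has an ascending normal series with a nilpotent and a finite factor. On the other hand, every $\delta_k$-value of $G/w(G)$ has order dividing $m$, since its $m$-th power is a $w$-value and hence trivial modulo $w(G)$; as $m$ is a prime power $p^a$, the $\delta_k$-values of $G/w(G)$ lying in $\bar H$ form a normal commutator-closed set of $p$-elements generating $\bar H$, and $\bar H$ is generated by $d$ of them. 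Proposition \ref{radical} then forces $\bar H$ to be finite. Thus $\delta_k(G)/w(G)$ is locally finite, $1\le w(G)\le\delta_k(G)$ is the series we wanted, and $G\in\mathcal V$.

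I expect the main obstacle to be the first step, the passage from the uniform per-factor bounds of Lemma \ref{bb} to a single normal nilpotent subgroup of $H$ of bounded index: the subgroups $M_i$ are not canonical, so one must check that $H\cap\prod_i M_i$ is still normal in $H$ and, above all, of bounded index, the technical heart being the fact that a $d$-generated subgroup of a Cartesian product of groups of bounded order is itself finite of bounded order. The hypothesis that $m$ is a prime power enters only at the very end, where it is precisely what makes Proposition \ref{radical} applicable to $\bar H$.
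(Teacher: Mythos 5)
Your proof is correct and follows essentially the same route as the paper: closure under subgroups and quotients is immediate, and for a Cartesian product one shows that $w(G)$ is locally nilpotent and that $v(G)/w(G)$ is locally finite via the uniform nilpotent-by-finite bounds of Lemma \ref{bb} applied to the projections, Hall's bound on subgroups of given index in a $d$-generated group, and a final application of Proposition \ref{radical}. The only cosmetic differences are that the paper cites Theorem A for the local nilpotency of $w(G)$ (where you re-derive it) and uses the canonical subgroup $K=\bigcap\{$subgroups of index $\le r\}$ in place of your $H\cap\prod_i M_i$.
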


\begin{proof}
Clearly the class $\mathcal V$ is closed with respect to taking subgroups and quotients of its members. Let $G$ be a Cartesian product of groups $G_i\in \mathcal V$. All $w$-values are $n$-Engel in $G$ and, by Proposition \ref{asc}, $w(G_i)$ is locally nilpotent for every $i$. This implies that $G$ belongs to the variety addressed in Theorem A and therefore $w(G)$ is locally nilpotent. We will prove that $v(G)/w(G)$ is locally finite, which implies that $G\in\mathcal V$ and $\mathcal V$ is a variety. Any finitely generated subgroup of $v(G)$ is contained in a subgroup generated by finitely many $\delta_k$-values. Therefore it is sufficient to show that if a subgroup $H$ is generated by finitely many (say $d$) $\delta_k$-values, then the image of $H$ in $v(G)/w(G)$ is finite.

Let $H_i$ denote the image of the projection of $H$ on $G_i$. Since $G_i\in \mathcal{V}$, Lemma \ref{bb} shows that each $H_i$ is an extension of a nilpotent group of class $c$ by a finite group of order $r$. Let $K$ be the intersection of all subgroups of $H$ of index $\leq r$. Thus $K$ is nilpotent of class at most $c$, and $H/K$ is finite \cite[Theorem 7.2.9]{mhall}. Now denote by $N$ the subgroup generated by all $w$-values of $G$ contained in $H$. Since $H$ is nilpotent-by-finite, applying Proposition \ref{radical}, we deduce that $H/N$ is finite. It follows that so is $H/H\cap w(G)$, as required.
\end{proof}

The following lemma collects two results that can be found in \cite{shu2}.

\begin{lem}\label{4.1}
Let $G$ be a group and $v$ a multilinear commutator word.
\begin{itemize}
\item[$(i)$] If $v$ is of weight $k$, then every $\delta_k$-value in $G$ is a $v$-value.
\item[$(ii)$] If $G$ is soluble and all $v$-values in $G$ have finite order, then the verbal subgroup $v(G)$ is locally finite.
\end{itemize}
\end{lem}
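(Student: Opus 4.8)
The two parts are practically independent, and I would establish them separately.

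\emph{Part (i).} I would argue by induction on the weight $k$ of $v$. When $k=1$ we have $v=x_1$ and every element is a $v$-value, so there is nothing to prove. For the inductive step I would first record that for every $j\ge 1$ each $\delta_j$-value is a $\delta_{j-1}$-value: a $\delta_j$-value has the form $[u_1,u_2]$ with $u_1,u_2$ being $\delta_{j-1}$-values, which for $j\ge 2$ are already $\delta_{j-2}$-values by this statement applied to $j-1$, say $u_t=\delta_{j-2}(\vec a_t)$, so that $[u_1,u_2]=\delta_{j-1}(\vec a_1,\vec a_2)$ is a $\delta_{j-1}$-value; the case $j=1$ is trivial. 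Iterating, every $\delta_j$-value is a $\delta_i$-value whenever $i\le j$. Now let $k\ge 2$ and write $v=[v_1,v_2]$ with $v_1,v_2$ multilinear commutators of weights $k_1,k_2\ge 1$, $k_1+k_2=k$. A $\delta_k$-value equals $[p_1,p_2]$ with $p_1,p_2$ being $\delta_{k-1}$-values; since $k_1,k_2\le k-1$, the fact just recorded shows $p_1$ is a $\delta_{k_1}$-value and $p_2$ a $\delta_{k_2}$-value, so by the inductive hypothesis $p_1$ is a $v_1$-value and $p_2$ a $v_2$-value, these being evaluated on disjoint sets of variables. Hence $[p_1,p_2]$ is a $v$-value, which proves (i).

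\emph{Part (ii).} I would first reduce to showing that $v(G)$ is periodic: $v(G)$ is then a periodic soluble group, and since a finitely generated periodic soluble group is finite (immediate by induction on the derived length), $v(G)$ is locally finite. To prove that $v(G)$ is periodic I would induct on the derived length $d$ of $G$. The case $d\le 1$ is clear ($v(G)=1$ when $v$ has weight $\ge 2$, and $v(G)=G$ is periodic by hypothesis when $v=x_1$). For $d\ge 2$, set $A=G^{(d-1)}$, an abelian subgroup normal in $G$ with $G/A$ of derived length $d-1$; by induction $v(G/A)=v(G)A/A$ is periodic, hence so is $v(G)/(v(G)\cap A)$. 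As an extension of a torsion group by a torsion group is torsion, it remains to prove that the abelian group $v(G)\cap A$ is periodic; and, replacing $G$ by $G/\tau(A)$, where $\tau(A)$ is the characteristic (hence $G$-normal) torsion subgroup of $A$, this comes down to the following assertion: \emph{if $A$ is a torsion-free abelian normal subgroup of a soluble group $G$ all of whose $v$-values are periodic, then $v(G)\cap A=1$.}

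This torsion-free assertion is the heart of the matter and the point where soft arguments run out; it is exactly the technical content borrowed from \cite{shu2}. The route I would take is: first reduce to $G$ finitely generated, by writing a hypothetical nontrivial element of $v(G)\cap A$ as a product of finitely many $v$-values and replacing $G$ by the subgroup generated by the arguments of those values; then regard the $G$-invariant subgroup of $A$ generated by the offending element as a finitely generated module over the group ring of a soluble (polycyclic, in the favourable case) quotient of $G$, and exploit that $v(G)$ is generated by periodic elements to force this torsion-free module to be trivial. In the favourable case, where $G$ is abelian-by-polycyclic — so residually finite, with Noetherian group ring — one finds that $v(G)A/A$ is finite, that $v(G)\cap A$ is a finitely generated module, and that the periodicity of the generating $v$-values makes it of finite exponent, hence trivial. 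Carrying this through for an arbitrary soluble $G$, which need not be residually finite and whose group rings need not be Noetherian, is the delicate point I expect to consume most of the effort, and it is precisely what \cite{shu2} supplies.
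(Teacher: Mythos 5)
The paper itself offers no proof of this lemma: both parts are simply quoted from \cite{shu2}. Your argument for part (i) is complete and correct, and it is the standard one: induction on the weight of $v$, combined with the auxiliary observation that every $\delta_j$-value is a $\delta_i$-value whenever $i\le j$. There is nothing to add there.

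For part (ii) your reductions are all sound: passing from local finiteness to periodicity of $v(G)$, inducting on the derived length, and factoring out the torsion subgroup of $A=G^{(d-1)}$ do correctly reduce the statement to the assertion that $v(G)\cap A=1$ when $A$ is a torsion-free abelian normal subgroup. But the mechanism you then propose for that assertion contains a genuine gap, exactly at the point you yourself flag as the heart of the matter. The step ``the periodicity of the generating $v$-values makes $v(G)\cap A$ of finite exponent, hence trivial'' cannot work as stated: a group generated by finitely many periodic elements, possessing a torsion-free abelian normal subgroup of finite index, need not intersect that subgroup trivially --- the infinite dihedral group, generated by two involutions with an infinite cyclic normal subgroup of index $2$, is the standard counterexample to precisely this inference. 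What has to be exploited, and what your sketch never invokes, is that \emph{all} $v$-values of $G$ are periodic, not merely the finitely many generators one starts with; in particular the $v$-values obtained by perturbing one argument of a given $v$-value by an element of $A$. The multilinearity of $v$ is what turns such perturbations into information about $[A,v(G)]$-type elements, and the torsion-freeness of $A$ then forces them to vanish. Since you ultimately defer this to \cite{shu2} --- exactly as the paper does for the whole lemma --- your text is acceptable as a citation, but it should not be read as a self-contained proof of (ii), and the heuristic offered for the crux is actively misleading about where the hypothesis on $v$ is used.
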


We are now ready to prove Theorem B.

\begin{proof}[\bf Proof of theorem B]
Let $v$ be a multilinear commutator word and $w=v^m$. Recall that $\mathcal V=\mathcal V(m,n,v)$ is the class of all groups $G$ in which the $w$-values are $n$-Engel and the verbal subgroup $v(G)$ has an ascending normal series whose quotients are either locally soluble or locally finite. We will prove that $\mathcal{V}$ is a variety. Let $G$ be a Cartesian product of groups $G_i\in \mathcal V$. Since $\mathcal V$ is closed with respect to taking subgroups and quotients of its members, the proof will be complete once it is shown that $G\in\mathcal V$. In particular it is sufficient to prove that $v(G)$ has an ascending normal series whose quotients are either locally soluble or locally finite. Of course, all $w$-values are $n$-Engel in $G$. Let $k$ be the weight of $v$. Then, by Lemma \ref{4.1} $(i)$, every $\delta_k$-value in each $G_i$ is a $v$-value. Hence $G_i^{(k)}\leq v(G_i)$ and therefore $G_i^{(k)}$ has an ascending normal series whose quotients are either locally soluble or locally finite. It follows that $G_i\in\mathcal V(m,n,\delta_k)$ for every $i$. The class $\mathcal V(m,n,\delta_k)$ is a variety by Proposition \ref{deltak} and so $G\in\mathcal V(m,n,\delta_k)$. Thus $G^{(k)}$ has an ascending normal series whose quotients are either locally soluble or locally finite and therefore the whole group $G$ has such a series.
\end{proof}

\section{Proof of Theorem C}

Theorem C can be obtained following the same general lines as that of Theorem B. Therefore we will just sketch out some steps in the proof and omit details.

Just as Theorem B, the result easily follows from the particular case where $u=\delta_k$. So without loss of generality we assume that $u=\delta_k$.
The next lemma was established in \cite{shu4}. An error contained in the proof in \cite{shu4} was subsequently corrected in \cite{KS}.

\begin{lem}\label{896} Let $d$, $k$, $m$ be positive integers and $G$ a finite group in which every product of $896$ ${\delta_k}$-values is of order dividing $m$. Assume that $G$ can be generated by $d$ elements $g_1,g_2,\ldots,g_d$ such that each $g_i$ and all commutators of the forms $[g,x]$ and $[g,x,y]$, where $g\in\{g_1,g_2,\ldots,g_d\}$, $x,y\in G$, have orders dividing $m$.
Then the order of $G$ is $\{d,k,m\}$-bounded.
\end{lem}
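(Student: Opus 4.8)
The plan is to prove this as a Restricted‑Burnside‑type bound, following the pattern of Lemma~\ref{fp} and Proposition~\ref{radical}: first reduce, by means of the classification of finite simple groups and a few standard manoeuvres, to the case where $G$ is a finite $p$-group, and then apply Zelmanov's Lie‑theoretic machinery. Two consequences of the hypotheses set everything up. First, since the identity is a $\delta_k$-value, every $\delta_k$-value of $G$ has order dividing $m$; moreover, by the Nikolov--Segal bounds on commutator width \cite{nise}, in the finite group $G$ the verbal subgroup $G^{(k)}=\delta_k(G)$ is precisely the set of products of $896$ $\delta_k$-values, so the hypothesis gives $\exp(G^{(k)})\mid m$; in particular $G$ is an extension of a group of exponent dividing $m$ by a soluble group of derived length at most $k$. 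This is the only place where the constant $896$ is used. Second, since $\gamma_i(G)/\gamma_{i+1}(G)$ is generated, as an abelian group, by the classes of the left‑normed commutators of weight $i$ in $g_1,\dots,g_d$, the hypothesis that $g_i$, $[g_i,x]$ and $[g_i,x,y]$ all have order dividing $m$ shows that each of $G/\gamma_2(G)$, $\gamma_2(G)/\gamma_3(G)$, $\gamma_3(G)/\gamma_4(G)$ has exponent dividing $m$; this is the ``seed'' that will later feed the sandwich machinery.

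For the reduction, every nonabelian composition factor of $G$ occurs inside $G^{(k)}$ and hence has exponent dividing $m$; a finite simple group of exponent dividing $m$ has $\{m\}$-bounded order, and since $G$ is $d$-generated the number of such factors is $\{d,m\}$-bounded (cf. the use of \cite[Theorem 7.2.9]{mhall} in the proof of Proposition~\ref{radical}). Thus $G$ modulo its soluble radical has $\{d,m\}$-bounded order. Passing to appropriate sections, and using coprime‑action together with Hall--Higman/Clifford‑type arguments to dispose of primes not dividing $m$, one arrives at a finite $p$-group, still $d'$-generated for some $\{d,m\}$-bounded $d'$ and carrying the appropriate analogues of the $896$-condition and of the bounded‑exponent condition on the first few lower central factors.

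In the $p$-group case I would use the Lie ring method. Let $L=L_p(G)$ be the restricted Lie algebra over $\mathbb{F}_p$ associated with the Jennings--Lazard--Zassenhaus filtration of $G$; then $L$ is generated by the images $\bar g_1,\dots,\bar g_{d'}$ and $|G|=p^{\dim_{\mathbb{F}_p}L}$. From $\exp(G^{(k)})\mid m$ one obtains, via Zelmanov's translation of a group exponent law into a polynomial identity for the associated Lie algebra, that $\delta_k(L)$ satisfies a polynomial identity; since $L/\delta_k(L)$ is soluble of derived length at most $k$, it follows that $L$ itself satisfies a polynomial identity. From the bounded‑exponent lower central factors one checks that each $\bar g_i$ --- or, after a short reduction, a bounded set of Lie commutators in the $\bar g_i$ that still generates $L$ --- is ad‑nilpotent of $\{d,k,m\}$-bounded index; morally, an element whose order, and enough of whose short commutators, have order a power of $p$ gives rise to a sandwich in $L_p(G)$ in the sense of Kostrikin--Zelmanov. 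Zelmanov's theorem --- a Lie algebra satisfying a polynomial identity and generated by finitely many ad‑nilpotent elements of bounded index is nilpotent of bounded class --- then shows that $L$ is nilpotent of $\{d,k,m\}$-bounded class $c$. But a nilpotent Lie algebra of class $c$ generated by $d'$ elements has dimension at most the number of basic commutators of weight at most $c$ in $d'$ letters; hence $\dim_{\mathbb{F}_p}L$ is $\{d,k,m\}$-bounded, and therefore so is $|G|=p^{\dim_{\mathbb{F}_p}L}$. Reassembling the reduction gives the bound on $|G|$ in general.

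The hardest point is the verification that the generators of $L_p(G)$ are ad‑nilpotent of bounded index: one has to show that the purely \emph{local} hypotheses of the lemma --- bounded order only for the chosen generators and their commutators of weight at most three, and the $896$-fold product condition rather than an honest exponent law on $G$ --- are nevertheless enough to drive the sandwich argument through on every $p$-section produced by the reduction, keeping track of generator counts and of which commutators retain bounded order throughout. This is exactly the step where the argument of \cite{shu4} was incomplete and where the correction in \cite{KS} is needed, so a full proof must reproduce (or reconstruct) that corrected sandwich analysis.
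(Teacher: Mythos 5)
You should first note that the paper offers no proof of Lemma \ref{896}: it is quoted from \cite{shu4}, together with the correction in \cite{KS}, and the surrounding text only explains where the constant $896$ comes from. Measured against that, your outline correctly reconstructs the architecture of the cited proof: use the Nikolov--Segal width results to turn the $896$-fold product hypothesis into $\exp(G^{(k)})\mid m$; use the classification (all nonabelian composition factors lie in $G^{(k)}$, hence have exponent dividing $m$ and, by the solution of the Restricted Burnside Problem, bounded order) together with $d$-generation to bound $G$ modulo its soluble radical; reduce to finite $p$-groups; and finish with Zelmanov's Lie-theoretic machinery applied to $L_p(G)$, the hypotheses on the $g_i$, $[g,x]$ and $[g,x,y]$ supplying ad-nilpotent generators and the exponent condition supplying a polynomial identity. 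As a description of the strategy this is accurate and consistent with the paper's own remarks about the role of \cite{nise}.

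As a proof, however, there is a genuine gap, and you have located it yourself: the verification that the generators of $L_p(G)$ are ad-nilpotent of $\{d,k,m\}$-bounded index from only the \emph{local} hypotheses of the lemma, and the bookkeeping needed to carry the generator data and the order conditions through the reduction to $p$-sections (quotients, Hall subgroups, coprime actions), are not routine; they constitute essentially all of the content of the lemma, and they are exactly the point at which the argument of \cite{shu4} was erroneous and had to be repaired in \cite{KS}. Writing that a full proof ``must reproduce (or reconstruct) that corrected sandwich analysis'' concedes that the decisive step is absent. Two smaller cautions: the statement you attribute to Nikolov--Segal --- that in \emph{every} finite group $G^{(k)}$ coincides with the set of products of $896$ $\delta_k$-values --- must be quoted in its precise form (their general verbal-width bounds depend on the number of generators; the absolute constant is available only for normal symmetric generating sets, which is how it actually enters in \cite{shu4}); and the clause ``using coprime-action together with Hall--Higman/Clifford-type arguments to dispose of primes not dividing $m$'' is doing a lot of unexamined work, since the hypotheses of the lemma are not obviously inherited by arbitrary sections. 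In short: the road map is right, but the lemma remains unproved by your text, just as it is (deliberately) left unproved in the paper.
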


The following proposition can be deduced from Lemma \ref{896} using a familiar argument.

\begin{pro}\label{radical2} Let $k,m$ be positive integers and $G$ a group in which every product of $896$ $\delta_k$-values has order diving $m$. Assume additionally that $G$ has an ascending normal series whose quotients are either locally soluble or locally finite. Then any subgroup of $G$ that can be generated by $d$ $\delta_k$-values is finite of $\{d,k,m\}$-bounded order.
\end{pro}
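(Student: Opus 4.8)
The plan is to mimic, almost word for word, the proof of Proposition~\ref{radical}, with Lemma~\ref{896} taking over the role played there by Lemma~\ref{fp} together with the Guralnick--Malle theorem. So I would fix a subgroup $H=\langle h_1,\dots,h_d\rangle$ of $G$ with each $h_i$ a $\delta_k$-value, and first treat the case that $H$ is finite.

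In that case the point is to check that $H$, with the generating set $h_1,\dots,h_d$, satisfies the hypotheses of Lemma~\ref{896}. Since the set of $\delta_k$-values of $G$ is closed under taking inverses and conjugates, for all $x,y\in H$ each of
$$h_i,\qquad [h_i,x]=h_i^{-1}h_i^{x},\qquad [h_i,x,y]=\bigl(h_i^{-x}h_i\bigr)\bigl(h_i^{-y}h_i^{xy}\bigr)$$
is a product of at most four $\delta_k$-values of $G$; padding with the trivial $\delta_k$-value $1$ (and using $4\le 896$) one writes each of them as a product of exactly $896$ $\delta_k$-values of $G$, so each has order dividing $m$ by hypothesis. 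Likewise every product of $896$ $\delta_k$-values of $H$ is such a product in $G$, hence has order dividing $m$. Thus Lemma~\ref{896} applies and $|H|$ is $\{d,k,m\}$-bounded.

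For the general case I would argue exactly as in the second half of the proof of Proposition~\ref{radical}. Every finite quotient of $H$ is generated by $d$ $\delta_k$-values and inherits the hypothesis on products of $896$ $\delta_k$-values, so by the finite case it has $\{d,k,m\}$-bounded order; as $H$ is finitely generated, its finite residual $R$ (the intersection of all subgroups of finite index) then has $\{d,k,m\}$-bounded index and $H/R$ is finite. If $R=1$ we are done, so suppose $R\neq1$. Then $R$ has finite index in $H$, hence is finitely generated, and it inherits from $G$ an ascending normal series with locally soluble or locally finite quotients; being finitely generated, $R$ is not a union of a strictly increasing chain of proper subgroups, so that series has a last proper term $R_0<R$ with $R/R_0$ locally soluble or locally finite, hence — being finitely generated — finite or soluble. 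If $R/R_0$ is finite, then $R_0$ has finite index in $H$, forcing $R\le R_0$, a contradiction. If $R/R_0$ is soluble and nontrivial, then $R'R_0\neq R$, so $R'<R$; moreover $H/R'$ is residually finite, as it contains the finitely generated abelian (hence residually finite) subgroup $R/R'$ of finite index, so $H/R'$, having only $\{d,k,m\}$-bounded finite quotients, is finite; hence $R\le R'$, again a contradiction. Therefore $R=1$ and $H$ is finite of $\{d,k,m\}$-bounded order, as required.

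I do not expect a genuine obstacle here: as the authors say, this is a ``familiar'' argument, its substance lying entirely in Lemma~\ref{896}. The one point that calls for attention is verifying that the generators $h_i$ and the short commutators $[h_i,x]$, $[h_i,x,y]$ fall within the scope of Lemma~\ref{896} — that is, have order dividing $m$ — which is exactly where one uses that the set of $\delta_k$-values is commutator-closed and closed under inversion and conjugation, so that these elements are products of at most four (hence, after padding, exactly $896$) $\delta_k$-values.
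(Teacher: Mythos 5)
Your proof is correct and follows essentially the same route as the paper's: verify via commutator-closedness (and padding with the trivial $\delta_k$-value) that the generators and the commutators $[h_i,x]$, $[h_i,x,y]$ have order dividing $m$, apply Lemma~\ref{896} in the finite case, and dispose of the infinite case by the finite-residual argument of Proposition~\ref{radical}. The extra details you supply (the padding to exactly $896$ factors, the residual finiteness of $H/R'$) are points the paper leaves implicit, not a different method.
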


\begin{proof} Choose $\delta_k$-values $g_1,\dots,g_d\in G$ and set $H=\langle g_1,\dots,g_d\rangle$. We have $[g,x]=g^{-1}g^x$ and $[g,x,y]=g^{-x}gg^{-y}g^{xy}$. Since $g,g^{-1}$ and all their conjugates are $\delta_k$-values for every $g\in\{g_1,g_2,\ldots,g_d\}$, we observe that the commutators of the forms $[g,x]$ and $[g,x,y]$, where $g\in\{g_1,g_2,\ldots,g_d\}$ and $x,y\in G$, are products of at most 4 $\delta_k$-values. Hence, by the hypothesis, all commutators of the forms $[g,x]$ and $[g,x,y]$ have orders dividing $m$. Thus, if $H$ is finite, by Lemma \ref{896}, the order of $H$ is $\{d,k,m\}$-bounded. On the other hand, if $H$ is infinite, we get a contradiction as in the proof of Proposition \ref{radical}.
\end{proof}

\begin{proof}[\bf Proof of Theorem C]
This follows applying Proposition \ref{radical2} and arguing as in Lemma \ref{bb} and Proposition \ref{deltak}.
\end{proof}

\section{Concluding remarks}

Recall some of the notation used in Section 3. Let $m$ be a prime-power, $n$ a positive integer, $v$ a multilinear commutator word and $w=v^m$. Then $\mathcal V(m,n,v)$ stands for the class of all groups $G$ in which the $w$-values are $n$-Engel and the verbal subgroup $v(G)$ has an ascending normal series whose quotients are either locally soluble or locally finite. Theorem B states that $\mathcal V(m,n,v)$ is a variety. Assume that the multilinear commutator word $v$ is of weight $k$. By Lemma \ref{4.1} $(i)$ in any group the set of $\delta_k$-values is contained in the set of $v$-values.

We will now have a closer look at the proof of Theorem B. Let $G\in\mathcal V(m,n,v)$. Proposition \ref{asc} shows that $w(G)$ is locally nilpotent. From Proposition \ref{radical} we easily deduce that the image of $G^{(k)}$ in $G/w(G)$ is locally finite. Now $(ii)$ of Lemma \ref{4.1} implies that the image of $v(G)$ in $G/G^{(k)}w(G)$ is locally finite. Thus, $v(G)$ is (locally nilpotent)-by-(locally finite) for every group $G\in\mathcal V(m,n,v)$. On the other hand, suppose that in a group $J$ all $w$-values are $n$-Engel and the verbal subgroup $v(J)$ is (locally nilpotent)-by-(locally finite). Of course, $J\in\mathcal V(m,n,v)$.

Therefore we have the following characterization of the variety $\mathcal V(m,n,v)$.
\medskip

\noindent {\it The variety $\mathcal V(m,n,v)$ is precisely the class of all groups $G$ in which the $w$-values are $n$-Engel and the verbal subgroup $v(G)$ is (locally nilpotent)-by-(locally finite).}
\medskip

Let us continue the inspection of the proof of Theorem B. It is easy to slightly generalize Proposition \ref{asc} and show that if $G$ is a group in which every finitely generated subgroup has an ascending normal series whose quotients are either locally soluble or locally finite, then the set of all Engel elements of $G$ form a locally nilpotent subgroup. It is even more obvious that Proposition \ref{radical} remains valid if the hypothesis that $G$ is a group having an ascending normal series whose quotients are either locally soluble or locally finite is replaced by the hypothesis that $G$ is a group in which every finitely generated subgroup has an ascending normal series whose quotients are either locally soluble or locally finite (in that proposition the group $G$ is finitely generated anyway). Thus, it becomes evident that a group $G$ belongs to $\mathcal V(m,n,v)$ if and only if the $w$-values in $G$ are $n$-Engel and every finitely generated subgroup of $v(G)$ has an ascending normal series whose quotients are either locally soluble or locally finite. So we obtain an alternative characterization of the variety $\mathcal V(m,n,v)$.
\medskip

\noindent {\it The variety $\mathcal V(m,n,v)$ is precisely the class of all groups $G$ in which the $w$-values are $n$-Engel and every finitely generated subgroup of $v(G)$ has an ascending normal series whose quotients are either locally soluble or locally finite.}
\medskip

Similar remarks can be made vis-\`a-vis Theorem C. Let $m,n$ be positive integers, $u$ a multilinear commutator word and $v$ the product of 896 $u$-words. Set $w=v^m$. Denote by $\mathcal U(m,n,u)$ the class of all groups $G$ in which the $w$-values are $n$-Engel and the verbal subgroup $u(G)$ has an ascending normal series whose quotients are either locally soluble or locally finite. Theorem C asserts that the class $\mathcal U(m,n,u)$ is a variety. Applying considerations similar to the above we obtain the following characterizations of the variety $\mathcal U(m,n,u)$.
\medskip

\noindent {\it The variety $\mathcal U(m,n,u)$ is precisely the class of all groups $G$ in which the $w$-values are $n$-Engel and the verbal subgroup $u(G)$ is (locally nilpotent)-by-(locally finite).}
\medskip

\noindent {\it The variety $\mathcal U(m,n,u)$ is precisely the class of all groups $G$ in which the $w$-values are $n$-Engel and every finitely generated subgroup of $u(G)$ has an ascending normal series whose quotients are either locally soluble or locally finite.}
\medskip

Finally, in the same spirit, we remark that the variety handled in Theorem A can be characterized as the class of all groups $G$ in which the $w$-values are $n$-Engel and every finitely generated subgroup of $w(G)$ has an ascending normal series whose quotients are either locally soluble or locally finite.

\end{document}